\newtheorem{dfn}{Definition}
\newtheorem{lem}{Lemma}
\newtheorem{thm}{Theorem}
\newtheorem{rmk}{Remark}
\newtheorem{prop}{Proposition}
\title{Discontinuous Riemann integrable functions emerging from cellular automata}
\author{Akane Kawaharada\footnote{E-mail: aka@kyokyo-u.ac.jp, Postal address: 1, Fujinomoricho, Fukakusa, Fushimi-ku, Kyoto-shi, Kyoto, 612-8522, Japan} \vspace{2mm}\\
Department of Mathematics, Kyoto University of Education}
\begin{document}

\maketitle

\begin{abstract}
This paper presents discontinuous Riemann integrable functions on the unit interval $[0, 1]$ derived from the dynamics of two-dimensional elementary cellular automata.
Based on the self-similarities of their orbits, we write down the numbers of nonzero states in the spatial and spatio-temporal patterns and obtain discontinuous Riemann integrable functions by normalizing the values.
We calculate the integrals of the two obtained functions over $[0, 1]$ and demonstrate the relationship between them.
\end{abstract}

\hspace{2.5mm} {\it Keywords} : cellular automaton, fractal, discontinuous Riemann integrable function\footnote{AMS subject classifications: $26A30$, $28A80$, $37B15$, $68Q80$}

\section{Introduction}

A cellular automaton is a discrete dynamical system whose all valuables are discrete and a transition rule is given for each cell simultaneously. 
It is well known that cellular automata generate self-similar patterns.
For example, the one-dimensional elementary cellular automaton Rule $90$ generates the Sierpinski gasket and some two-dimensional elementary cellular automata generate isotopic crystal-like patterns.
Based on these self-similar structures, we constructed singular functions by normalizing the number of nonzero states in the spatio-temporal patterns of cellular automata.
A singular function is a function that is monotonically increasing (or decreasing) and continuous everywhere, with a zero derivative almost everywhere; for example, Salem's singular function \cite{salem1943, derham1957, ulam1934, yhk1997}.
We studied the relationship between Salem's singular function and elementary cellular automata, Rule $90$ and two two-dimensional elementary cellular automata \cite{kawanami2014, kawa2014a, kawanami2017, kawanami2019, kawanami2020}, and that between another new singular function and Rule $150$ \cite{kawa2021}. 

This paper presents new Riemann integrable functions with countable discontinuous points from two-dimensional elementary cellular automata. 
It is well known that there exist some Riemann integrable functions with discontinuous points. 
For example, Riemann's function is Riemann integrable and discontinuous at rational points if the denominator of an irreducible fraction is even \cite{riemann1867} and Thomae's function is also Riemann integrable and discontinuous at all rational points \cite{thomae1875}.
In this study, we consider two symmetrical two-dimensional elementary cellular automata, $T_a$ and $T_b$.
Figures~\ref{fig:st010} and \ref{fig:st696} show the spatio-temporal patterns of $T_a$ and $T_b$ from time step $0$ to $15$.
We calculate the number of nonzero states in their spatial and spatio-temporal patterns. 
By normalizing and limiting the dynamics of the numbers, we determine the sizes of self-similar sets and express the functions by an infinite sum of their sizes.
We show that the resulting functions are Riemann integrable and calculate the integrals of the functions. 

The remainder of this paper is organized as follows. 
Section \ref{sec:pre} describes the preliminaries concerning two-dimensional elementary cellular automata and the number of nonzero states in their spatial and spatio-temporal patterns. 
We review previous results about a two-dimensional cellular automaton, $T_{S0}$. 
In Section \ref{sec:main}, the number of nonzero states of the spatial and spatio-temporal patterns of $T_a$ and $T_b$ are given.
We define the given functions and express them using the self-similarities of the spatial patterns of the automata. 
We show that the resulting functions on $[0, 1]$ are bounded, uniformly continuous, and differentiable almost everywhere and calculate the definite integral of the functions over $[0, 1]$.
We also discuss the relationship between the two functions. 
Finally, Section \ref{sec:cr} discusses the findings of this study and highlights the possible future study directions.

\section{Preliminaries}
\label{sec:pre}

In this section, we present some definitions and notations for cellular automata. 
We also provide an overview of previous results about the number of nonzero states in the spatial patterns of a cellular automaton.

\subsection{Two-dimensional cellular automata}
 
In this study, we mainly focus on two-state two-dimensional cellular automata. 
Let $\{0,1\}$ be a binary state set and $\{0,1\}^{{\mathbb Z}^2}$ be the two-dimensional configuration space.
Suppose that $(\{0,1\}^{{\mathbb Z}^2}, T)$ is a discrete dynamical system consisting of the space $\{0,1\}^{{\mathbb Z}^2}$ and a transformation $T$ on $\{0,1\}^{{\mathbb Z}^2}$.
The $t$-th iteration of $T$ is denoted by $T^t$ and $T^0$ is the identity map on $\{0,1\}^{{\mathbb Z}^2}$.

\begin{dfn}
\begin{enumerate}
\item A two-dimensional elementary cellular automaton ($2$dECA) $(\{0,1\}^{{\mathbb Z}^2}, T)$ is given by 
\begin{align}
(T x)_{i, j} = 
f 
\begin{pmatrix}
x_{i, j+1} \\ x_{i-1, j} \quad x_{i, j} \quad x_{i+1, j} \\ x_{i, j-1}
\end{pmatrix}
= f
\begin{pmatrix}
U \\ L C R \\ D
\end{pmatrix}
\end{align}
for $(i, j) \in {\mathbb Z}^{2}$ and $x \in \{0,1\}^{{\mathbb Z}^{2}}$,  
where $f : \{0,1\}^{5} \to \{0,1\}$ depends on the five states of the von Neumann neighborhood; here, $f$ is a local rule. 
\item $2$dECA $(\{0,1\}^{{\mathbb Z}^{2}}, T)$ is a symmetrical $2$dECA (Sym-$2$dECA) 
if the local rule $f$ satisfies the following conditions:
\begin{align}
\label{eq:sym1}
&f \! \! 
\begin{pmatrix}
U \\  \! L C R \! \\ D
\end{pmatrix}
= f \! \! 
\begin{pmatrix}
D \\ \! L C R \! \\ U
\end{pmatrix}
= f \! \! 
\begin{pmatrix}
U \\ \! R C L \! \\ D
\end{pmatrix}
, \\
\label{eq:sym2}
&f \! \! 
\begin{pmatrix}
U \\ L C R \\ D
\end{pmatrix}
= f \! \! 
\begin{pmatrix}
L \\ \! D C U \! \\ R
\end{pmatrix}
= f \! \! 
\begin{pmatrix}
D \\ \! R C L \! \\ U
\end{pmatrix}
= f \! \! 
\begin{pmatrix}
R \\ \! U C D \! \\ L
\end{pmatrix}.
\end{align}
\end{enumerate}
\end{dfn}
The first set of equalities \eqref{eq:sym1} provides the left-right and top-bottom symmetries, while the second set \eqref{eq:sym2} provides the rotational symmetries. 
A local rule of Sym-$2$dECA is determined by a combination of twelve transitions (see Table~\ref{tab:u}). 

\begin{table}[H]
\caption{Local rules of Sym-$2$dECAs, $T_a$, $T_b$, and $T_{S0}$}
\label{tab:u}
\begin{center}
\scalebox{0.9}{
\begin{tabular}{c | c c c c c c c c c c c c} \hline \vspace{-1mm}
$U$ & $1$ & $0$ & $1$ & $0$ & $0$ & $0$ & $1$ & $0$ & $1$ & $0$ & $0$ & $0$\\ \vspace{-1mm}
$LCR$ & $111$ & $111$ & $010$ & $011$ & $010$ & $010$  &  $101$  &  $101$  &  $000$  &  $001$  &  $000$  & $000$\\
$D$ & $1$ & $1$ & $1$ & $1$ & $1$ & $0$ & $1$ & $1$ & $1$ & $1$ & $1$ & $0$\\ \hline
$(T_a x)_{i,j}$
& $\ast$ & $\ast$ & $\ast$ & $\ast$ & $\ast$ & $0$ & $1$ & $\ast$ & $1$ & $0$ & $1$ & $0$ \\
$(T_b x)_{i,j}$
& $1$ & $\ast$ & $1$ & $0$ & $1$ & $1$ & $1$ & $\ast$ & $\ast$ & $0$ & $1$ & $0$ \\ 
$(T_{S0} x)_{i,j}$
& $\ast$ & $\ast$ & $\ast$ & $\ast$ & $\ast$ & $0$ & $1$ & $\ast$ & $1$ & $0$ & $1$ & $0$ \\ 
\hline
\end{tabular}
}
\end{center}
\vspace{-2mm} \hspace{9cm} ($\ast$ is either $0$ or $1$.)
\end{table}

\begin{figure}[H]
\begin{center}
\includegraphics[width=1.\linewidth]{./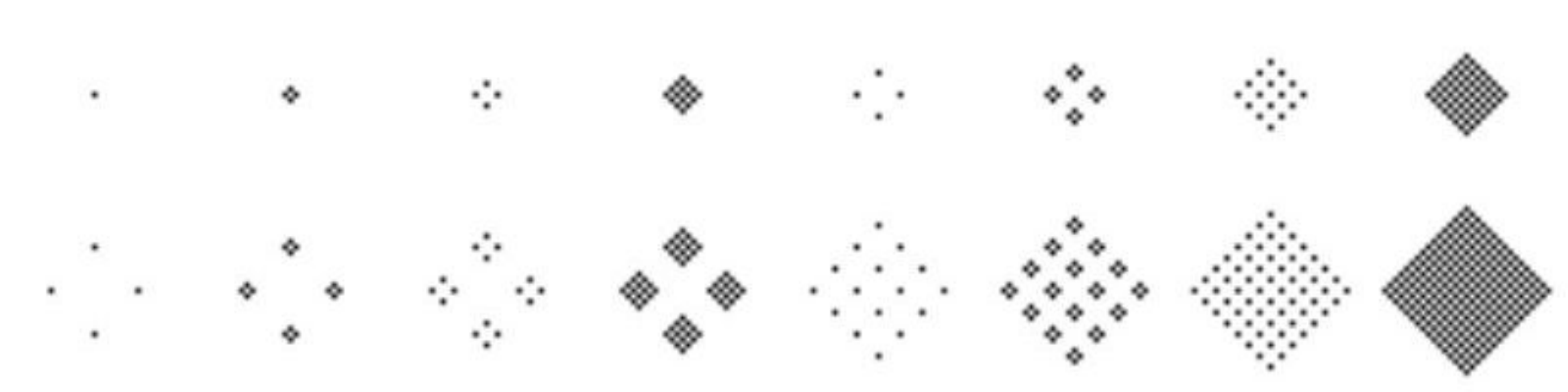}
\end{center}
\caption{Spatio-temporal pattern $T_{S0}^t x_o$ for $0 \leq t \leq 15$}
\label{fig:stS0}
\end{figure}

The configuration $x_{o} \in \{0,1\}^{{\mathbb Z}^2}$ is called the single site seed, wherein  
\begin{align}
({x_o})_{i,j} = \left\{
\begin{array}{ll}
1 & \mbox{if ${(i,j)} = (0, 0)$},\\
0 & \mbox{if ${(i,j)} \in {{\mathbb Z}^2} \backslash \{ (0, 0) \}$}.
\end{array}
\right. 
\label{eq:sss}
\end{align} 
In this study, we investigate the orbits from $x_o$ as the initial configuration. 
Figure~\ref{fig:stS0}, for example, shows the spatio-temporal pattern of Sym-$2$dECA $T_{S0}^t x_o$ from time step $0$ to $15$. 
The local rule of $T_{S0}$ is given in Table~\ref{tab:u}, where the symbol $\ast$ is either $0$ or $1$ because its value does not affect $T^t x_o$ for any $t$.

\begin{figure}[htbp]
\begin{minipage}{0.5\hsize}
\begin{center}
\includegraphics[width=70mm]{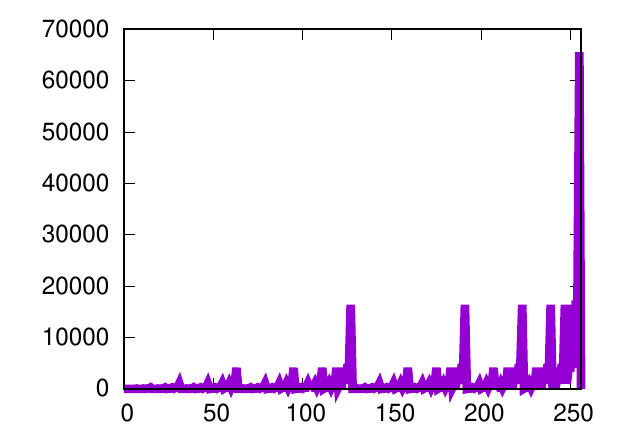}
\end{center}
\caption{$num_{S0}(t)$ for $1 \leq t \leq 256$}
\label{fig:num000}
\end{minipage}
\begin{minipage}{0.5\hsize}
\begin{center}
\includegraphics[width=70mm]{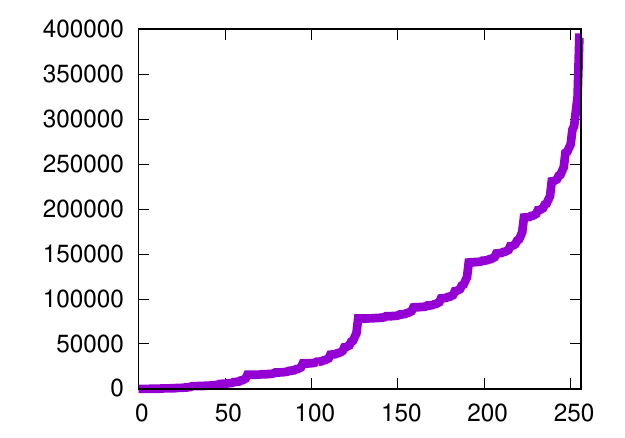}
\end{center}
\caption{$cum_{S0}(t)$ for $1 \leq t \leq 256$}
\label{fig:cum000}
\end{minipage}
\end{figure}

Next, we consider the number of nonzero states in the spatial and spatio-temporal patterns of Sym-$2$dECAs. 
For an automaton $T$, let $num_T(t)$ be the number of nonzero states in a spatial pattern $T^{t-1} x_o$ for $t>0$, and let $cum_T(t)$ be the cumulative sum of the number of nonzero states in a spatial pattern $T^{n-1} x_o$ from $n=1$ to $t(>0)$. 
We set $num_T(0) = cum_T(0) = 0$ owing to a technical reason.
Thus, we have
\begin{align}
num_T(t) = \sum_{(i, j) \in {{\mathbb Z}^2}} (T^{t-1} x_o)_{i, j}, \quad
cum_T(t) = \sum_{n = 0}^t \sum_{(i, j) \in {{\mathbb Z}^2}} (T^{n-1} x_o)_{i, j}.
\end{align}
We provide an overview of previous results regarding the number of nonzero states in the spatial or spatio-temporal pattern of Sym-$2$dECA $T_{S0}$.
In the case of $T_{S0}$, $num_T(t)$ and $cum_T(t)$ are denoted by $num_{S0}(t)$ and $cum_{S0}(t)$, respectively.
The graphs in Figures~\ref{fig:num000} and \ref{fig:cum000} show the dynamics of $num_{S0}(t)$ and $cum_{S0}(t)$ for $1 \leq t \leq 256$.
We obtained the following results about the number of nonzero states of $T_{S0}$.

\begin{prop}[\cite{kawanami2020}]
\label{prop:cumnumS0}
The values of $cum_{S0}$ and $num_{S0}$ are given by
\begin{align}
cum_{S0}(2^k) = 5^k \quad \mbox{and} \quad
num_{S0}(t+1) = 4^{\sum_{j=0}^{l-1} t_j}
\end{align}
for $t = \sum_{i=0}^{l-1} t_i 2^i$ and $k \in {\mathbb Z}_{\geq 0}$.
\end{prop}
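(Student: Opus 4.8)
The plan is to reduce both identities to a single self-similarity statement for the orbit of the single-site seed $x_o$, and to extract from it the doubling recursions
\[
num_{S0}(2m-1) = num_{S0}(m), \qquad num_{S0}(2m) = 4\,num_{S0}(m) \qquad (m \ge 1),
\]
with the base value $num_{S0}(1)=1$. These two recursions are equivalent to the closed form claimed in the proposition, and, as I explain below, they also deliver the cumulative formula at one stroke, so the whole of Proposition~\ref{prop:cumnumS0} rests on them.

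First I would establish the geometric self-similarity of the spatio-temporal pattern by induction on the dyadic scale. Reading off the non-$\ast$ transitions of Table~\ref{tab:u}, the configuration $T^{t}_{S0}x_o$ spreads along the von Neumann directions, and I would prove that the family $\{T^{a-1}_{S0}x_o\}_{a\ge 1}$ — the patterns counted by $num_{S0}(a)$ — satisfies a time-doubling self-similarity: for an odd argument $a=2m-1$ the pattern is a copy of the pattern counted by $num_{S0}(m)$ with all spatial coordinates dilated by a factor $2$ (hence with the same number of nonzero cells), while for an even argument $a=2m$ it is the disjoint union of four translated copies of that pattern, one growing from each of the four extreme tips reached along the axes. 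The induction has to carry not only the positions of the nonzero cells but also the fact that the cells on the interfaces between the emerging copies stay $0$ at exactly the right times, so that the copies evolve independently and the local rule never encounters a neighborhood outside the non-$\ast$ rows of Table~\ref{tab:u}. Pinning down this non-interference invariant — equivalently, checking that the replicated copies never collide and that the $\ast$-neighborhoods indeed do not occur on the orbit — is the main obstacle; it amounts to a careful but finite case analysis of the rule on the boundary cells, propagated through the induction.

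Granting the self-similarity, the two doubling recursions are immediate: dilating a configuration by $2$ preserves its number of nonzero cells, giving $num_{S0}(2m-1)=num_{S0}(m)$, and assembling four disjoint copies multiplies the count by $4$, giving $num_{S0}(2m)=4\,num_{S0}(m)$. I would then obtain the closed form by induction on the argument: writing $s(t)=\sum_{j=0}^{l-1}t_j$ for the binary digit sum of $t=\sum_{i=0}^{l-1}t_i2^i$, one has $s(2m-2)=s(m-1)$ and $s(2m-1)=s(m-1)+1$, so the recursions reproduce $num_{S0}(a)=4^{s(a-1)}$, that is $num_{S0}(t+1)=4^{s(t)}$. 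Finally, summing the recursions over one dyadic block,
\[
cum_{S0}(2^{k+1}) = \sum_{m=1}^{2^{k}}\bigl(num_{S0}(2m-1)+num_{S0}(2m)\bigr) = 5\sum_{m=1}^{2^{k}}num_{S0}(m) = 5\,cum_{S0}(2^{k}),
\]
and with $cum_{S0}(1)=1$ this telescopes to $cum_{S0}(2^{k})=5^{k}$, completing the proof. It is worth noting that the factor $5$ here is exactly the size of the von Neumann neighborhood, reflecting the one-plus-four copy structure of the self-similarity.
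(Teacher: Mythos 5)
You should note at the outset that the paper contains no proof of Proposition~\ref{prop:cumnumS0} to compare against: it is imported verbatim from \cite{kawanami2020} as background, so your attempt has to stand on its own. Its outer, arithmetic layer does stand: the two doubling recursions $num_{S0}(2m-1)=num_{S0}(m)$, $num_{S0}(2m)=4\,num_{S0}(m)$ are indeed equivalent to the closed form $num_{S0}(t+1)=4^{s(t)}$ via the digit-sum identities $s(2m-2)=s(m-1)$, $s(2m-1)=s(m-1)+1$, and summing over a dyadic block to get $cum_{S0}(2^{k+1})=5\,cum_{S0}(2^k)$ is clean and correct. The genuine gap is in the geometric core, which you both defer (``the main obstacle'') and, more importantly, state in a form that is false, so the induction you plan to run would not close. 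Write $P_t\subset{\mathbb Z}^2$ for the live set at time $t$, so $num_{S0}(a)=|P_{a-1}|$. The invariant that actually works is $P_{2s}=2P_s$ together with $P_{2s+1}=\bigcup_{v\in\{\pm e_1,\pm e_2\}}(2P_s+v)$: four \emph{interleaved} translates of the whole dilated pattern by the four unit vectors, disjoint because $2P_s$ lies in the even sublattice and no two live cells are ever von Neumann adjacent. Your version --- four copies ``one growing from each of the four extreme tips reached along the axes'' --- is a different, generally false, placement. Concretely, $P_2=\{(\pm2,0),(0,\pm2)\}$, $P_4=\{(\pm4,0),(0,\pm4)\}$, and $P_5$ consists of $(\pm3,0),(\pm5,0),(4,\pm1),(-4,\pm1)$ and their rotations; copies of $P_2$ (or of $2P_2$) hung at the tips $(\pm4,0),(0,\pm4)$ would predict live cells at $(2,0),(6,0),(4,\pm2)$, all of which are dead, whereas the four unit translates $P_4\pm e_1$, $P_4\pm e_2$ reproduce $P_5$ exactly. (A tip decomposition does hold, but it is $P_t=\bigcup_v(2^kv+P_{t-2^k})$ for $2^k\le t<2^{k+1}$ --- copies of the pattern $2^k$ steps earlier, the analogue of the paper's decomposition for $T_a$ --- not copies of the half-time pattern.) Since your induction is supposed to carry ``the positions of the nonzero cells,'' carrying the tip-placement claim fails already at $t=5$.

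There is a second concrete snag in your stated method of ``reading off the non-$\ast$ transitions of Table~\ref{tab:u}'': the printed $T_{S0}$ row is identical to the $T_a$ row and is inconsistent with the proposition. That row sends the neighborhood $U=1$, $LCR=101$, $D=1$ (dead center, all four neighbors live) to $1$, so at time $2$ the origin relights, $P_2=\{(0,0),(\pm2,0),(0,\pm2)\}$, and $num_{S0}(3)=5\neq 4=4^{s(2)}$; your recursion $num_{S0}(2m-1)=num_{S0}(m)$ fails at its very first application $m=2$. The printed row in fact reproduces $num_a$ (Proposition~\ref{prop:num10} gives $num_a(3)=5$ and $num_a(5)=13$, matching a direct simulation of that row), so it is evidently a misprint in this paper, and your ``finite case analysis of the rule on the boundary cells'' must be run on the rule of \cite{kawanami2020}, in which that four-neighbor transition yields $0$. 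In summary: your reduction of both identities to the doubling recursions, and the factor $5=1+4$, are exactly right, but as submitted the proposal leaves the entire substance --- the self-similarity induction with the non-interference invariant --- unexecuted, and the two anchors it was to be built on (the positional invariant and the printed rule table) are each wrong as stated.
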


\begin{prop}[\cite{kawanami2020}]
\label{prop:cumS0}
The normalized value of $cum_{S0}(t)/cum_{S0}(2^k)$ is represented by Salem's singular function $L_{\alpha}: [0,1] \to [0,1]$ with $\alpha = 1/5$, where
\begin{align}
L_{\alpha} (x):=
\left\{
\begin{array}{ll}
\alpha L_{\alpha}(2 x) & \ (0 \leq x < 1/2),\\
(1-\alpha) L_{\alpha}(2 x-1) + \alpha & \ (1/2 \leq x \leq 1).
\end{array}
\right. 
\label{eq:lb}
\end{align}
\end{prop}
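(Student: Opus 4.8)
The plan is to reduce the claim to the self-similar functional equation \eqref{eq:lb} defining $L_\alpha$. First I would use Proposition~\ref{prop:cumnumS0} to put the cumulative count in closed form. Writing $s(m) = \sum_{j} m_j$ for the sum of the binary digits of $m$, Proposition~\ref{prop:cumnumS0} gives $num_{S0}(n) = 4^{s(n-1)}$, so
\begin{align}
cum_{S0}(t) = \sum_{n=1}^{t} num_{S0}(n) = \sum_{m=0}^{t-1} 4^{s(m)}.
\end{align}
The precise statement I would prove is that $cum_{S0}(t)/cum_{S0}(2^k) = L_{1/5}(t/2^k)$ for every $k \in {\mathbb Z}_{\geq 0}$ and every integer $0 \leq t \leq 2^k$; since $cum_{S0}(2^k) = 5^k$ this amounts to showing that $\phi(t/2^k) := cum_{S0}(t)/5^k$ agrees with $L_{1/5}$ at all dyadic rationals.

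The key step is to extract from the digit-sum structure the two self-similar recursions that mirror \eqref{eq:lb}. Splitting the range $0 \leq m \leq 2^k-1$ according to the top bit, the lower half ($m < 2^{k-1}$) contributes $cum_{S0}(2^{k-1}) = 5^{k-1}$, while on the upper half every index is $m = 2^{k-1} + m'$ with $s(m) = 1 + s(m')$, so term-by-term the upper half equals $4\,cum_{S0}(t')$ when $t = 2^{k-1} + t'$ with $0 \leq t' \leq 2^{k-1}$. This yields
\begin{align}
cum_{S0}(2^{k-1} + t') = 5^{k-1} + 4\, cum_{S0}(t').
\end{align}
The analogous splitting of $m = 2q + r$ using $s(2q) = s(q)$ and $s(2q+1) = s(q)+1$ gives the consistency relation $cum_{S0}(2t) = 5\,cum_{S0}(t)$, which guarantees that $\phi$ is well defined on dyadic rationals independently of the chosen representation $t/2^k = 2t/2^{k+1}$.

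Dividing these identities by the appropriate power of $5$ translates them exactly into \eqref{eq:lb}: for $x = t/2^k \in [0,1/2]$ one gets $\phi(x) = \tfrac{1}{5}\phi(2x)$, and for $x = t/2^k \in [1/2,1]$ one gets $\phi(x) = \tfrac{4}{5}\phi(2x-1) + \tfrac{1}{5}$, with boundary values $\phi(0) = 0$ and $\phi(1) = 1$. Since \eqref{eq:lb} determines the value of any solution at a dyadic rational uniquely by induction on the denominator exponent $k$, an induction on $k$ then forces $\phi(t/2^k) = L_{1/5}(t/2^k)$ at all dyadic points; because the dyadic rationals are dense and $L_{1/5}$ is continuous, this identifies the normalized cumulative sum with Salem's function. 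I expect the main obstacle to be the careful bookkeeping in the upper-half recursion --- verifying that the shift $m \mapsto m' = m - 2^{k-1}$ stays within the relevant range and increases the digit sum by exactly one --- rather than anything in the final induction, which is routine once the two recursions are in hand.
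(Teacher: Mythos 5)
Your proposal is correct, and it is worth noting that this paper contains no proof of Proposition~\ref{prop:cumS0} at all: the result is quoted from \cite{kawanami2020}, so your argument is a self-contained derivation where the paper offers only a citation. Your two recursions are the right ones and both check out: from $cum_{S0}(t)=\sum_{m=0}^{t-1}4^{s(m)}$ (which follows from Proposition~\ref{prop:cumnumS0}, itself also quoted without proof here), the top-bit split gives $cum_{S0}(2^{k-1}+t')=5^{k-1}+4\,cum_{S0}(t')$ since $s(2^{k-1}+m')=s(m')+1$ for $m'<2^{k-1}$, and the parity split gives $cum_{S0}(2t)=5\,cum_{S0}(t)$, which is exactly what makes $\phi(t/2^k)=cum_{S0}(t)/5^k$ well defined on dyadic representations; dividing by $5^k$ reproduces the two branches of \eqref{eq:lb} with $\alpha=1/5$, including the consistency at $x=1/2$ where both branches yield $\phi(1/2)=5^{k-1}/5^k=1/5=\alpha$. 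The concluding induction on $k$ is routine, as you say. One small remark: your final appeal to density of the dyadics and continuity of $L_{1/5}$ is superfluous for the statement as phrased, since the claim concerns only the normalized values $cum_{S0}(t)/cum_{S0}(2^k)$, which live at dyadic points; the continuity step matters only if one wants to assert that the normalized cumulative sums converge to $L_{1/5}$ as a function on all of $[0,1]$. Compared with the geometric route of the source paper (which argues from the self-similar structure of the spatio-temporal pattern, in the spirit of the proofs of Propositions~\ref{prop:num10} and \ref{prop:num696} here), your digit-sum argument is more elementary and purely combinatorial: it trades the picture of five self-similar pieces for the identity $\sum_{j}\binom{k}{j}4^j=5^k$ and two one-line splittings, at the cost of presupposing the closed form for $num_{S0}$ rather than deriving everything from the automaton itself.
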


Here, we consider the normalized value of $num_{S0}$ and define a function $H_k$ for $k \in {\mathbb Z}_{\geq 0}$.

\begin{rmk}
\label{rmk:hx}
We study $num_{S0}(t+1)/num_{S0}(2^k)$ for some $k \in {\mathbb Z}_{\geq 0}$.
By Proposition~\ref{prop:cumnumS0}, for $x = 1/2^k + \sum_{i=1}^k \hat{x}_i /2^i \in [1/2^k, 1]$, where $\hat{x}_i = t_{k-i}$, we have
\begin{align}
\label{eq:hk}
\frac{num_{S0}(t+1)}{num_{S0}(2^k)} = \frac{4^{\sum_{j=1}^k \hat{x}_j}}{4^k} =: H_k(x).
\end{align}
Because $H_k$ is only defined for at most $k$-digit binary decimals more than zero, $x \in [1/2^k, 1]$; we expand the domain to the unit interval $[0,1]$.
For $x=0$, we define $H_k(0)=0$ as $num_{S0}(0) = 0$ and 
for more than $k$-digit binary decimals, $x \in (0,1)$, we define $H_k(x) = 0$.
Thus, we have the function $H_k$ on $[0,1]$:
\begin{align}
H_k(x) = \left\{
\begin{array}{l l}
4^{\sum_{j=1}^k \hat{x}_j} /4^k & \mbox{if at most $k$-digit binary decimals except $x=0$},\\
0 & \mbox{if $x=0$ or more than $k$-digit binary decimals}.
\end{array}
\right .
\label{eq:Hconc}
\end{align}
The function $H_k$ is on $[0,1]$ and Riemann integrable because there are only finite discontinuous points on $[0,1]$. 

%
\end{rmk}

\section{Main results}
\label{sec:main}

Consider two orbits of Sym-$2$dECAs, $T_a$ and $T_b$, from the single site seed $x_o$. 
We calculate the numbers of nonzero states in the spatial and spatio-temporal patterns of $T_a$ and $T_b$ and show that the limits of the normalized numbers of $num_a(t)$ and $num_b(t)$ are represented by pathological functions with countable discontinuous points that are Riemann integrable.

\subsection{Numbers of nonzero states for Sym-$2$dECA $T_a$ and given function}

In this section, we examine the orbit of Sym-$2$dECA $T_a$.
Figure~\ref{fig:st010} shows the spatio-temporal pattern $T_a^t x_o$ for $0 \leq t \leq 15$.
According to Figure~\ref{fig:st010}, the spatial pattern for a finite time step $t$ consists of five parts. 
Four of them are congruent; they are also congruent with the spatial pattern of time step $t-2^k$; the remaining is a square with the length of $2^{k+1}-t-1$ cells at the center of each spatial pattern.
For the spatio-temporal pattern $T_a^{2^k} x_o$, the squares consist of a quadrangular pyramid whose apex is located at the bottom. 

\begin{figure}[H]
\begin{center}
\includegraphics[width=1.\linewidth]{./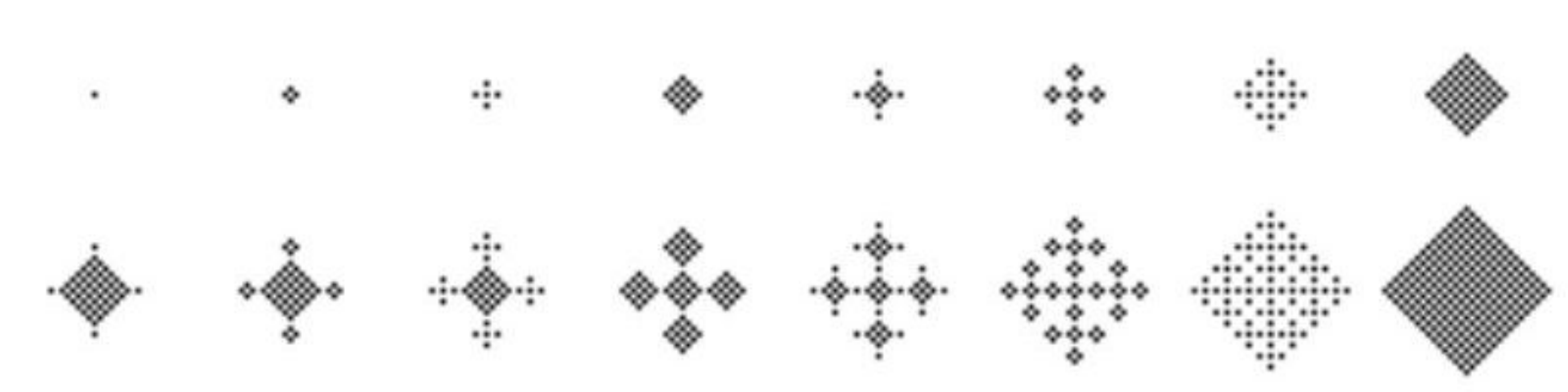}
\end{center}
\caption{Spatio-temporal pattern $T_a^t x_o$ for $0 \leq t \leq 15$}
\label{fig:st010}
\end{figure}

For Sym-$2$dECA $T_a$, we obtain the following results regarding $num_a(t)$ and $cum_a(2^k)$.

\begin{prop}
\label{prop:num10}
For $t = \sum_{i=0}^k t_i 2^i \geq 0$ with some $k \in {\mathbb Z}_{\geq 0}$ ($t_i=0$ for $i > k$), we have
\begin{align}
num_a(t) &= \sum_{i=0}^k t_{k-i} 4^{\sum_{j=0}^i t_{k-j+1}} \left( 2^{k-i+1} -t + \sum_{j=0}^i t_{k-j+1} 2^{k-j+1} \right)^2,\\
cum_a(2^k) &= \frac{1}{18} (2 \cdot 8^k + 8 \cdot 5^k + 9 \cdot 4^k - 2^k). 
\end{align}
\end{prop}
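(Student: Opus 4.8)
The plan is to reduce both formulas to a single self-similar recursion for $num_a$ and then solve it. First I would record, from the five-part decomposition of the spatial pattern described just above and visible in Figure~\ref{fig:st010}, the recursion
\begin{align}
num_a(t) = 4\, num_a(t - 2^k) + (2^{k+1} - t)^2 \qquad (2^k \le t < 2^{k+1}),
\end{align}
with base value $num_a(0) = 0$: the four congruent parts each carry $num_a(t-2^k)$ nonzero cells, while the central filled square of side $2^{k+1}-t$ contributes its area. The genuinely delicate point — and the step I expect to be the main obstacle — is to justify this decomposition rigorously from the local rule $f$ of $T_a$ rather than merely reading it off the picture: one must show by induction on $t$ that the four outer blocks are exactly congruent to the pattern at time $t-2^k$, that the central region is completely filled, and that the five parts are pairwise disjoint and exhaust the support, checking in particular that the boundary interactions governed by the twelve transitions in Table~\ref{tab:u} neither create nor destroy cells where the blocks meet.

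Granting the recursion, the closed form for $num_a(t)$ follows by induction on the number $m$ of nonzero binary digits of $t$. I would write $t = \sum_s 2^{p_s}$ with $p_1 > \cdots > p_m$ and observe that the summand indexed by $i$ in the statement is precisely $4^{s-1}\bigl(2^{p_s} - \sum_{u>s} 2^{p_u}\bigr)^2$ for the $s$-th digit. The only non-routine identity is that the ``side length'' attached to the $s$-th digit of $t$ equals the side length attached to the $(s-1)$-st digit of $t - 2^{p_1}$, which lets the inductive hypothesis for $t-2^k$ reindex cleanly into $4\,num_a(t-2^k)$, with the top digit supplying the extra term $(2^{k+1}-t)^2$.

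For $cum_a(2^k)$ I would sum the recursion over dyadic blocks. Setting $D_j = \sum_{t=0}^{2^j - 1} num_a(t)$ and summing $num_a(t) = 4\,num_a(t-2^j) + (2^{j+1}-t)^2$ over $t \in [2^j, 2^{j+1})$ gives
\begin{align}
D_{j+1} = 5\,D_j + \sum_{s=1}^{2^j} s^2 = 5\,D_j + \frac{2\cdot 8^j + 3\cdot 4^j + 2^j}{6}, \qquad D_0 = 0,
\end{align}
since as $t$ runs through the block the central side $2^{j+1}-t$ runs through $1, \dots, 2^j$. This is a first-order linear recurrence whose homogeneous part is $5^j$; solving it with the ansatz $D_j = A\cdot 5^j + \alpha\, 8^j + \beta\, 4^j + \gamma\, 2^j$ determines $\alpha = \tfrac19$, $\beta = -\tfrac12$, $\gamma = -\tfrac1{18}$ from the forcing term and then $A = \tfrac49$ from $D_0 = 0$. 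Finally $cum_a(2^k) = D_k + num_a(2^k)$ with $num_a(2^k) = 4^k$, and collecting the four geometric terms over a common denominator of $18$ yields $\tfrac{1}{18}(2\cdot 8^k + 8\cdot 5^k + 9\cdot 4^k - 2^k)$. All the arithmetic here is routine; the only care needed is the off-by-one bookkeeping between the time index of $num_a$ and the exponent $k$, together with the inclusion of the endpoint term $num_a(2^k)$ in the cumulative sum.
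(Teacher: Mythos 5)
Your proposal is correct, and its first half coincides with the paper's argument: the paper establishes the closed form for $num_a(t)$ from exactly your recursion, written uniformly as $num_a(t) = 4^{t_k}\, num_a(t - t_k 2^k) + t_k (2^{k+1}-t)^2$, and unfolds it inductively; your reindexing of the summand as $4^{s-1}\bigl(2^{p_s} - \sum_{u>s} 2^{p_u}\bigr)^2$ agrees with the paper's term, since $\sum_{j=0}^{i} t_{k-j+1}$ counts the nonzero digits above position $k-i$ and the squared factor reduces to $2^{p_s} - \sum_{u>s}2^{p_u}$. Note also that the paper, like you, takes the five-part decomposition from the picture without re-deriving it from the local rule, so the ``main obstacle'' you flag is left at the same informal level there. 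Where you genuinely diverge is $cum_a(2^k)$: the paper works in the three-dimensional spatio-temporal pattern, decomposing $\{T_a^t x_o\}_{t=0}^{2^k}$ into five congruent quadrangular pyramids $P_k$ and one central pyramid $Q_k$, which gives $u_{k+1} = 5u_k + v_{k+1}$ with $u_k = cum_a(2^k)$ and $v_{k+1} = \sum_{i=1}^{2^k-1} i^2$ the count in $Q_k$, the factor $5$ being read off the geometry. You instead obtain the same first-order linear recurrence purely algebraically by summing the spatial recursion over the dyadic block $t \in [2^j, 2^{j+1})$, getting $D_{j+1} = 5D_j + \sum_{s=1}^{2^j} s^2$ with the $5$ arising as $4+1$, and then adding the endpoint $num_a(2^k) = 4^k$; the differing upper limits of the sum of squares are just your differing bookkeeping (your $D_j$ excludes the top endpoint, the paper's $u_k$ includes it), and your constants $\alpha = \tfrac19$, $\beta = -\tfrac12$, $\gamma = -\tfrac1{18}$, $A = \tfrac49$ and the final collection over $18$ all check out against the stated formula. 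Your route buys self-containedness — it needs no separate geometric claim about the spatio-temporal pyramids beyond the spatial decomposition already used for $num_a$ — while the paper's pyramid picture gives the $5^k$ term a direct geometric meaning as five self-similar copies per doubling of the time scale.
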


\begin{proof}
The value of $num_a(t)$ is calculated using the following recurrence equations. 
For $t=0$ and $1$, $num_a(0)=0$ and $num_a(1)=1$ are easily obtained.
Using the self-similarities of the spatial patterns, we obtain the third equation.
\begin{align}
\label{eq:4tk}
\begin{cases}
num_a(0) &= 0, \\
num_a(1) &= 1,\\
num_a(t) &= 4^{t_k} \, num_a(t-t_k 2^k) + t_k (2^{k+1}-t)^2.
\end{cases}
\end{align}
According to Equation~\eqref{eq:4tk}, we have $num_a(t-t_k 2^k) = 4^{t_{k-1}} \, num_a(t- t_k 2^k - t_{k-1} 2^{k-1}) + t_{k-1} (2^k-t + t_k 2^k)^2$. Hence,
\begin{align}
num_a(t) 
&= 4^{t_k} (4^{t_{k-1}} \, num_a(t- t_k 2^k - t_{k-1} 2^{k-1}) + t_{k-1} (2^k-t + t_k 2^k)^2) + t_k (2^{k+1}-t)^2\\
&= 4^{t_k + t_{k-1}} \, num_a(t- t_k 2^k - t_{k-1} 2^{k-1}) + 4^{t_k} t_{k-1} (2^k-t + t_k 2^k)^2 + t_k (2^{k+1}-t)^2.
\end{align}
Because $num_a \left(t- \sum_{i=0}^k t_{k-i} 2^{k-i} \right) = num_a(0) = 0$, we inductively calculate
\begin{align}
\label{eq:num01}
num_a(t) &= \sum_{i=0}^k t_{k-i} 4^{\sum_{j=0}^i t_{k-j+1}} \left( 2^{k-i+1} -t + \sum_{j=0}^i t_{k-j+1} 2^{k-j+1} \right)^2.
\end{align}

\begin{figure}[htbp]
\begin{minipage}{0.5\hsize}
\begin{center}
\includegraphics[width=50mm]{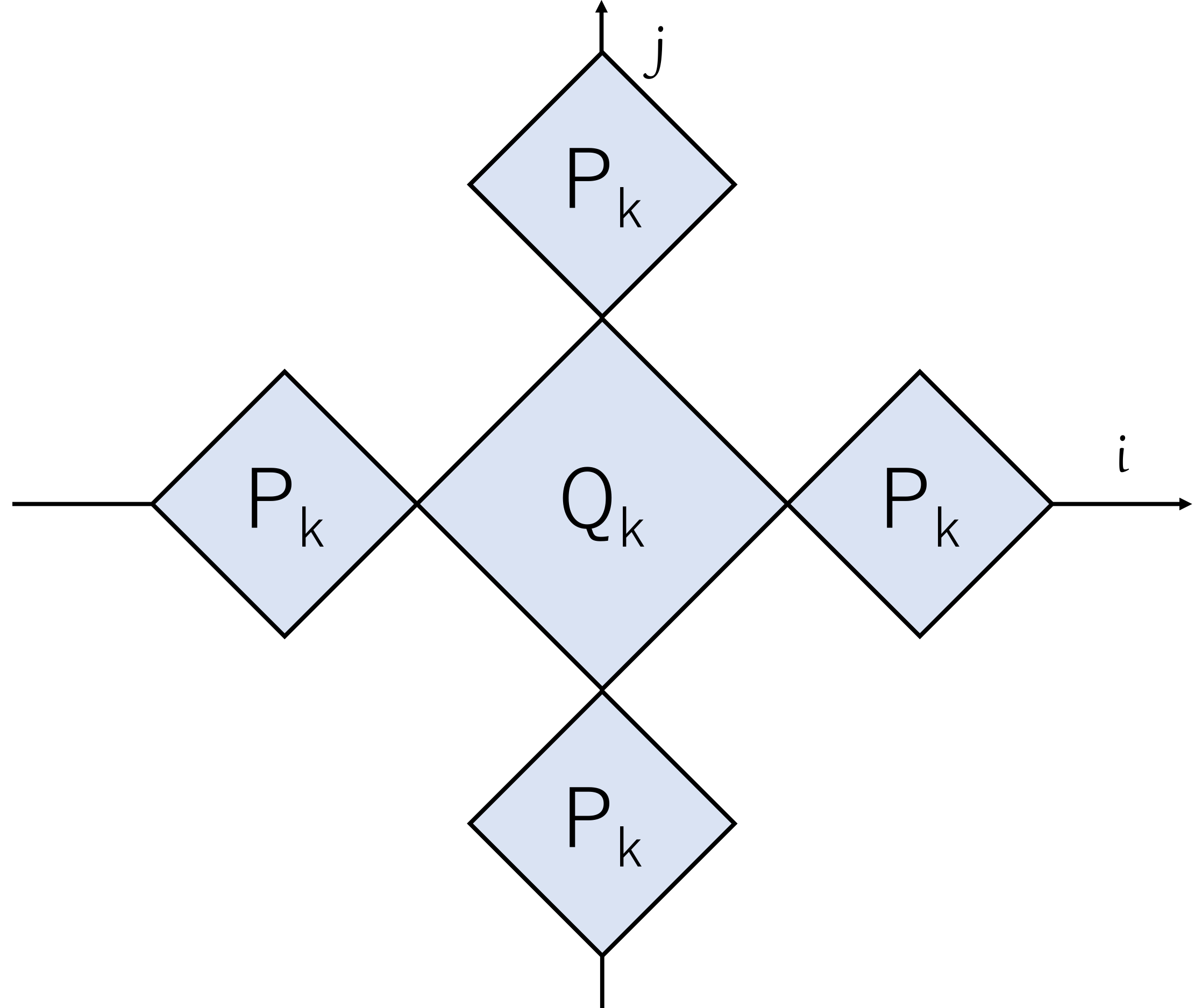}\\
$(a)$ Slice of quadrangular pyramids, four $P_k$s and one $Q_k$, on $T_a^t x_o$ and $T_b^t x_o$ for some $t$
\end{center}
\end{minipage}
\begin{minipage}{0.5\hsize}
\begin{center}
\includegraphics[width=50mm]{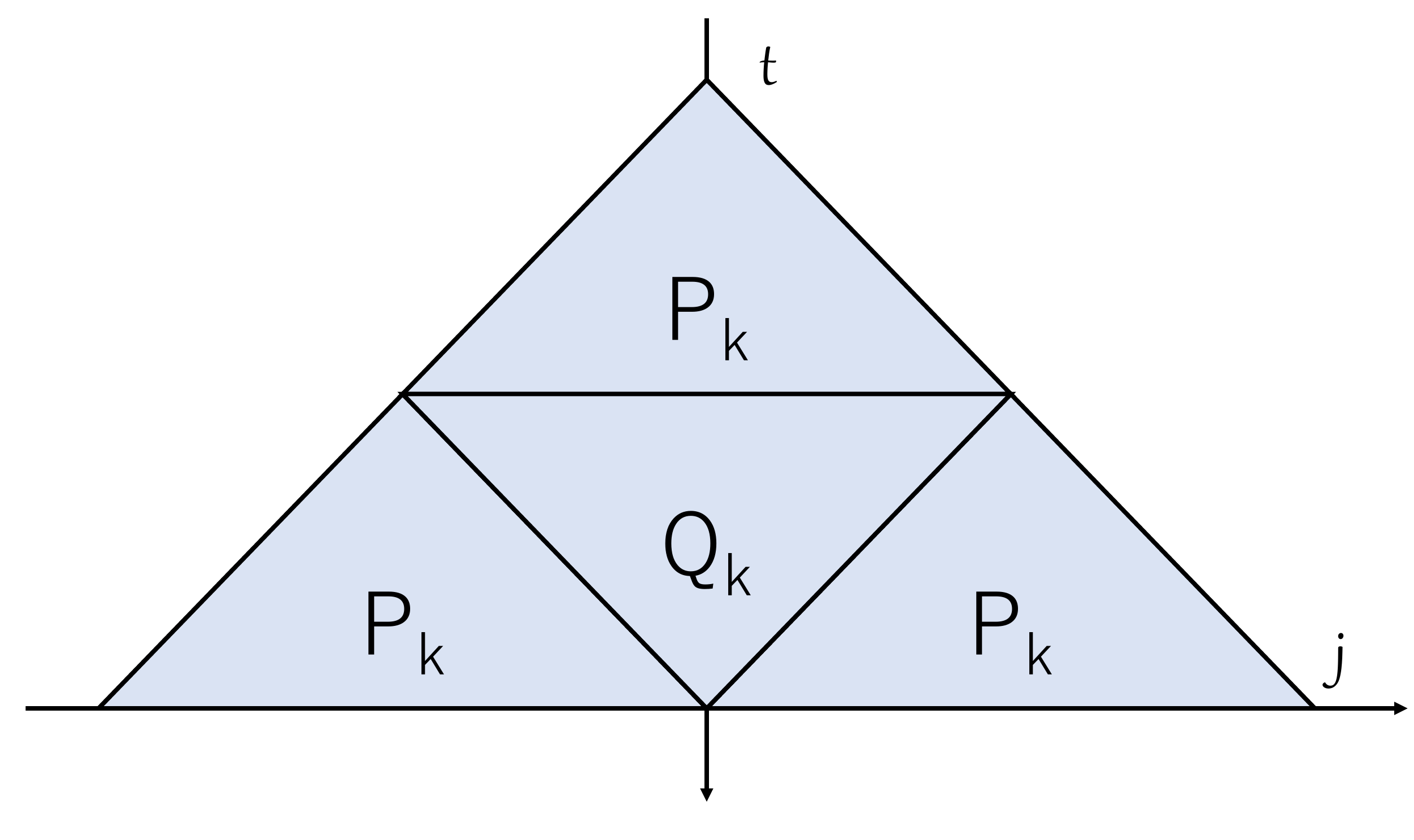}\\
$(b)$ Quadrangular pyramids, three $P_k$s and one $Q_k$, on the cutting planes of $\{T_a^t x_o\}_{t=0}^{2^k}$ and $\{T_b^t x_o\}_{t=0}^{2^k}$ when $i=0$
\end{center}
\end{minipage}
\caption{Self-similar quadrangular pyramids, $P_k$s and $Q_k$, on $\{T_a^t x_o \}_{t=0}^{2^k}$ and $\{T_a^t x_o \}_{t=0}^{2^k}$ for $k \in {\mathbb Z}_{\geq 0}$}
\label{fig:sph}
\end{figure}

Next, we calculate $cum_a(2^k)$. 
The spatio-temporal pattern $\{T_a^t x_o\}_{t=0}^{2^k}$ consists of six quadrangular pyramids, five $P_k$s, and one $Q_k$ (see Figure~\ref{fig:sph}). 
Let $u_k = cum_a(2^k)$ and let $v_k$ be the number of nonzero states in $Q_k$. Thus, 
\begin{align}
\begin{cases}
u_0 = 1,\\
v_{k+1} = \sum_{i=1}^{2^k-1} i^2 
= \frac{1}{6} 2^k (2^k-1) (2^{k+1}-1) 
= \frac{1}{6} (2 \cdot 8^k -3 \cdot 4^k + 2^k),\\
u_{k+1} = 5 u_k + v_{k+1}.
\end{cases}
\end{align}
Consequently, we have
\begin{align}
u_k &= \frac{4 \cdot 5^k}{9} + \frac{1}{18} (2 \cdot 8^k + 9 \cdot 4^k - 2^k)\\
&= \frac{1}{18} (2 \cdot 8^k + 8 \cdot 5^k + 9 \cdot 4^k - 2^k) = cum_a(2^k).
\end{align}
\end{proof}

\begin{figure}[htbp]
\begin{minipage}{0.5\hsize}
\begin{center}
\includegraphics[width=70mm]{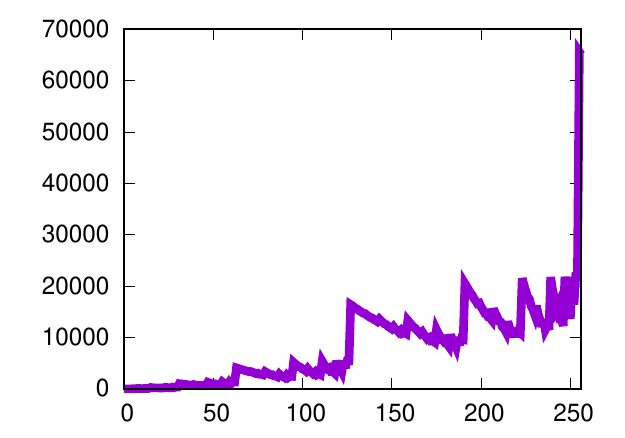}
\end{center}
\caption{$num_a(t)$ for $1 \leq t \leq 256$}
\label{fig:num010}
\end{minipage}
\begin{minipage}{0.5\hsize}
\begin{center}
\includegraphics[width=70mm]{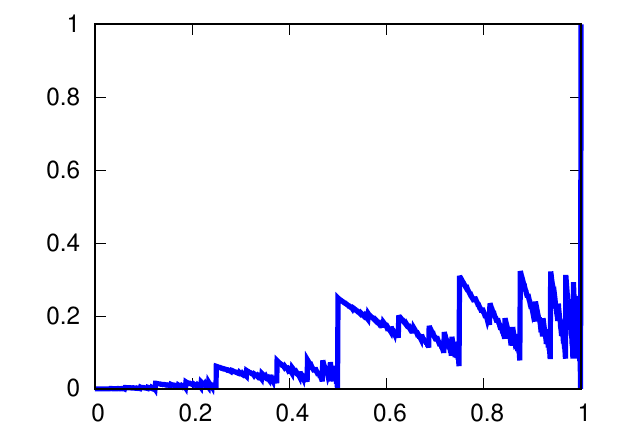}
\end{center}
\caption{$F(x)$}
\label{fig:Fx}
\end{minipage}
\end{figure}

The dynamics of $num_a(t)$ is plotted in Figure~\ref{fig:num010}.
We present a function $F$ by normalizing $num_a(t)$. 

\begin{dfn}
A nonnegative integer $t|_k (\leq 2^k)$ is expressed as $t|_k= \sum_{i=0}^k t_i 2^i$.
For $k > 0$ and $x = t|_k/2^k \in [0,1]$, 
we present $f_k(x) := num_a(t|_k)/num_a(2^k)$ and $F(x) := \lim_{k \to \infty} F_k(x)$.
\end{dfn}

Figure~\ref{fig:Fx} shows the function $F$ on $[0,1]$; the next theorem gives a result for $F$.

\begin{thm}
\label{thm:fx}
The function $F: [0,1] \to [0,1]$ is given by
\begin{align}
F(x) = \sum_{i=0}^{\infty} x_i 4^{\sum_{j=0}^{i-1} x_j} \left(\sum_{j=i}^{\infty} \frac{1-x_j}{2^j} \right)^2,
\end{align}
where we adopt a finite binary decimal number if $x \in [0, 1]$ has two binary expansions.
\end{thm}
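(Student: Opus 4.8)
The plan is to reduce the identity to an explicit finite formula for $F_k$ and then pass to the limit $k\to\infty$. First I would record the normalizing constant: evaluating the closed form of Proposition~\ref{prop:num10} (or the recurrence~\eqref{eq:4tk}) at $t=2^k$ leaves a single surviving term, so $num_a(2^k)=(2^k)^2=4^k$. Hence $F_k(x)=num_a(t|_k)/4^k$, and since $t|_k\le 2^k$ one also checks $num_a(t|_k)\le num_a(2^k)$, so that $F_k(x)\in[0,1]$ and any limit lies in $[0,1]$.

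Next I would carry out the algebraic rewriting. Writing $x_i:=t_{k-i}$ for the binary digits of $x=t|_k/2^k$, reindexing $\sum_{j=0}^{i}t_{k-j+1}=\sum_{j=0}^{i-1}x_j$, dividing the squared factor of Proposition~\ref{prop:num10} by $num_a(2^k)=(2^k)^2$, and using $2^{1-i}=\sum_{l\ge i}2^{-l}$, the bracket becomes $\sum_{l=i}^{k}(1-x_l)2^{-l}+2^{-k}$. This gives the exact finite identity
\begin{align}
F_k(x)=\sum_{i=0}^{k}x_i\,4^{\sum_{j=0}^{i-1}x_j}\Big(\sum_{l=i}^{k}\frac{1-x_l}{2^l}+\frac{1}{2^k}\Big)^2 .
\end{align}
In particular, when $x$ is a dyadic rational the theorem's series terminates and coincides term by term with this expression (the tail $\sum_{l>k}2^{-l}=2^{-k}$ being exactly the extra $2^{-k}$), so the asserted closed form already holds at every dyadic point; it remains to justify the passage to the limit for general $x$.

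Each summand converges as $k\to\infty$ to $x_i\,4^{\sum_{j=0}^{i-1}x_j}\big(\sum_{l\ge i}(1-x_l)2^{-l}\big)^2$, while the number of summands tends to infinity. The main obstacle is that the factor $4^{\sum_{j<i}x_j}$ grows (up to $4^i$), so termwise convergence alone is not enough; I must separately prove that the infinite series converges and that the limit may be taken inside it. To control the series I would set $s_i:=\sum_{l\ge i}(1-x_l)2^{-l}=2^{-i}r_i$ with $r_i:=\sum_{m\ge0}(1-x_{i+m})2^{-m}$, and observe $r_i\le1$ whenever $x_i=1$; the $i$-th summand is then $x_i\,r_i^2\,4^{-D_i}$, where $D_i$ is the number of zero digits among $x_0,\dots,x_{i-1}$. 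Grouping the positions with $x_i=1$ into maximal runs of ones, $r_i$ decays geometrically toward the end of each run, so a run preceded by $d$ zeros contributes at most a fixed constant times $4^{-d}$; since distinct runs have strictly increasing depths $d$, summing a geometric series shows the series converges and is bounded, consistent with $F:[0,1]\to[0,1]$.

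Finally, to identify $\lim_k F_k(x)$ with this sum I would exploit the self-similarity directly: the same reindexing shows that the proposed series $F$ satisfies $F(x)=\tfrac14 F(2x)$ on $[0,1/2)$ and $F(x)=(1-x)^2+F(2x-1)$ on $[1/2,1)$, with $F(1)=1$. Since $F_k(x)=F(x^{(k)})$ for the dyadic truncation $x^{(k)}=\lfloor 2^kx\rfloor/2^k\nearrow x$, it suffices to prove $F(x^{(k)})\to F(x)$. The functional equation, combined with the run-length estimate above bounding the oscillation of $F$ over a dyadic interval of length $2^{-k}$, yields continuity of $F$ at every non-dyadic $x$ (the only jumps occurring at dyadics), and hence the convergence, while at dyadic $x$ the truncation is eventually constant. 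I expect this last step---controlling the limit through the non-contracting $x\ge1/2$ branch of the functional equation---to be the hardest part, and it is precisely the run-length decay of the quadratic factor that makes those boundary contributions uniformly negligible.
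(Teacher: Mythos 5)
Your computation coincides with the paper's own proof in its core: the paper likewise evaluates $num_a(2^k)=4^k$, substitutes $x_i=t_{k-i}$ into Proposition~\ref{prop:num10}, and arrives at exactly your finite identity, writing the bracket as $\sum_{j=i}^{\infty}2^{-j}-\sum_{j=i}^{k}x_j2^{-j}$, which equals your $\sum_{l=i}^{k}(1-x_l)2^{-l}+2^{-k}$; the dyadic two-expansion convention is handled the same way. Where you genuinely diverge is after that identity: the paper simply lets $k\to\infty$ in the finite sum and declares the infinite series to be the limit, deferring all analytic justification — boundedness of the series is only proved later (Lemma~\ref{lem:bnd}, via the same run decomposition $(O_1I_1O_2I_2\cdots)$ and the same $4/9$ bound you rediscover with your $D_i$/run-depth estimate), and continuity at points with infinite binary expansion is only asserted in the proof of Theorem~\ref{thm:prop}~$(i)$. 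You correctly identify that termwise convergence alone does not suffice (the weights $4^{\sum_{j<i}x_j}$ grow like $4^i$, so the naive bound on $|F_k(x)-F(x)|$ is $O(1)$), and your plan — prove convergence of the series by run-length decay, observe $F_k(x)=F(x^{(k)})$ at dyadic truncations because the sequence is eventually constant there, derive the self-similarity relations $F(x)=\frac{1}{4}F(2x)$ on $[0,1/2)$ and $F(x)=(1-x)^2+F(2x-1)$ on $[1/2,1)$ (both of which I checked are correct under the paper's digit convention with $x_0$ the integer part), and conclude via continuity at non-dyadic points — is sound and fills a real gap in the published argument. The one step still only sketched is the oscillation bound over a dyadic interval of length $2^{-k}$: as you anticipate, it needs the fact that for non-dyadic $x$ the number of zeros $D_k$ among the first $k$ digits tends to infinity, so that both the tail $\sum_{i>k}$ (bounded by a constant times $4^{-D_k}$) and the head perturbation (bounded by splitting runs at position $k/2$, giving $O(2^{-k/2}+4^{-D_{k/2}})$) vanish; this works, but should be written out, since it is precisely the point the paper leaves implicit. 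Your side remark that $num_a(t|_k)\le num_a(2^k)$ is true but unproved as stated; it is inessential to the identity and, like the paper, you could instead obtain the range claim $F:[0,1]\to[0,1]$ from the $4/9$ bound.
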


\begin{proof}
When we set $x|_k = \sum_{i=0}^k x_i 2^{-i} \in [0,1]$, then
$t|_k = \sum_{i=0}^k t_i 2^i 
= 2^k \sum_{i=0}^k t_i 2^{i-k} 
= 2^k \sum_{i=0}^k t_{k-i} 2^{-i}
= 2^k \sum_{i=0}^k x_i 2^{-i}$.
Thus, for $x = t|_k/2^k \in [0,1]$,
\begin{align}
F_k(x) &:= \frac{num_a(t|_k)}{num_a(2^k)}\\
&= \frac{1}{4^k} \sum_{i=0}^k t_{k-i} 4^{\sum_{j=0}^i t_{k-j+1}} \left( 2^{k-i+1} - \sum_{j=0}^k t_j 2^j + \sum_{j=0}^i t_{k-j+1} 2^{k-j+1} \right)^2\\
&= \sum_{i=0}^k x_i 4^{\sum_{j=0}^{i-1} x_j} \left( 2^{-i+1} - \sum_{j=0}^k x_j 2^{-j} + \sum_{j=0}^{i-1} x_j 2^{-j} \right)^2\\
&= \sum_{i=0}^k x_i 4^{\sum_{j=0}^{i-1} x_j} \left( \sum_{j=i}^{\infty} 2^{-j} - \sum_{j=i}^k x_j 2^{-j} \right)^2.
\end{align}
Therefore,
\begin{align}
F(x) := \lim_{k \to \infty} F_k (x) 
= \sum_{i=0}^{\infty} x_i 4^{\sum_{j=0}^{i-1} x_j} \left(\sum_{j=i}^{\infty} \frac{1-x_j}{2^j} \right)^2.
\end{align}
The binary expansion of $x$ is unique, except the dyadic rationals 
$x= m/2^i$, which have two possible expansions. 
For $\tilde{x}= \sum_{i=0}^{l-1} x_i / 2^i + \sum_{i=l+1}^{\infty} x_i / 2^i$ and 
$\hat{x} = \sum_{i=0}^{l-1} x_i / 2^i + 1/2^l$, we have $\tilde{x} = \hat{x}$, but $F(\tilde{x}) \neq F(\hat{x})$. Thus, we adopt $F(\hat{x})$ for a finite binary decimal $\hat{x}$.
\end{proof}

We set $F(x) = \sum_{i=0}^{\infty} f_i(x)$, where
\begin{align}
\label{eq:fi}
f_i(x) &:= x_i 4^{\sum_{j=0}^{i-1} x_j} \left(\sum_{j=i}^{\infty} \frac{1-x_j}{2^j} \right)^2.
\end{align}
This function states that for $i=0$,
\begin{align}
f_0 (x) &= \left\{
\begin{array}{ll}
0 & \left( 0 \leq x < 1\right), \\
1 & \left( x=1 \right)
\end{array}
\right .
\end{align}
and if $i \in {\mathbb Z}_{>0}$,
\begin{align}
f_i(x) &= \left\{
\begin{array}{ll}
0 & \left( 2k/2^i \leq x < (2k+1)/2^i, \ x=1\right),\\
4^{\sum_{j=0}^{i-1} x_j} \left( \sum_{j=i+1}^{\infty} (1-x_j)/2^j \right)^2 & \left( (2k+1)/2^i \leq x < (2k+2)/2^i \right)
\end{array}
\right .
\end{align}
for $k = 0, 1, \ldots, (2^i-1)/2$.
Each function $f_i$ is piecewise continuous on $[2(2k+1)/2^i, (2k+2)/2^i)$.
Figure~\ref{fig:Fix} shows the graphs of $f_1$, $f_2$, $f_3$, and $f_4$.


\begin{figure}[htbp]
\begin{minipage}{0.5\hsize}
\begin{center}
\includegraphics[width=60mm]{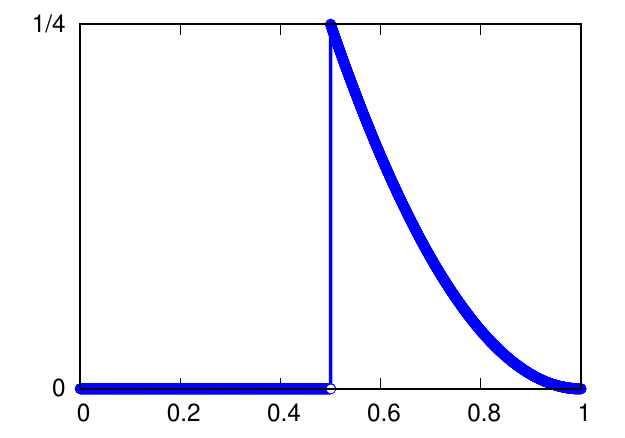}\\
(a) $f_1(x)$
\end{center}
\end{minipage}
\begin{minipage}{0.5\hsize}
\begin{center}
\includegraphics[width=60mm]{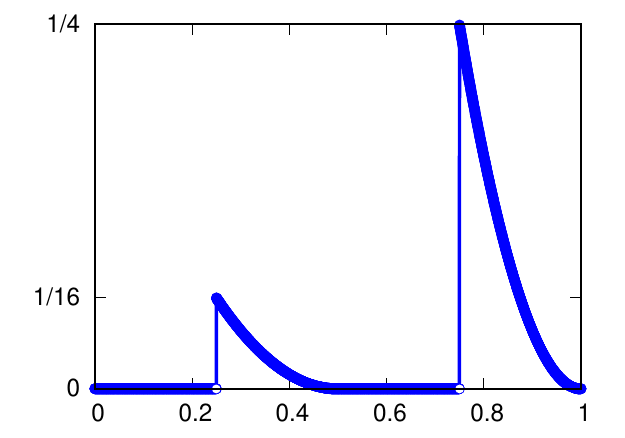}\\
(b) $f_2(x)$
\end{center}
\end{minipage}\\
\begin{minipage}{0.5\hsize}
\begin{center}
\includegraphics[width=60mm]{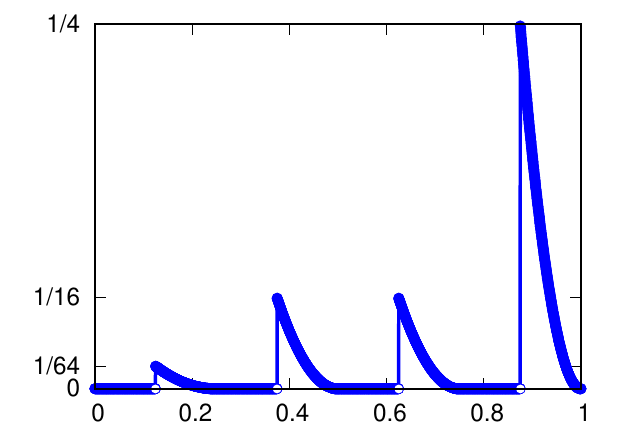}\\
(c) $f_3(x)$
\end{center}
\end{minipage}
\begin{minipage}{0.5\hsize}
\begin{center}
\includegraphics[width=60mm]{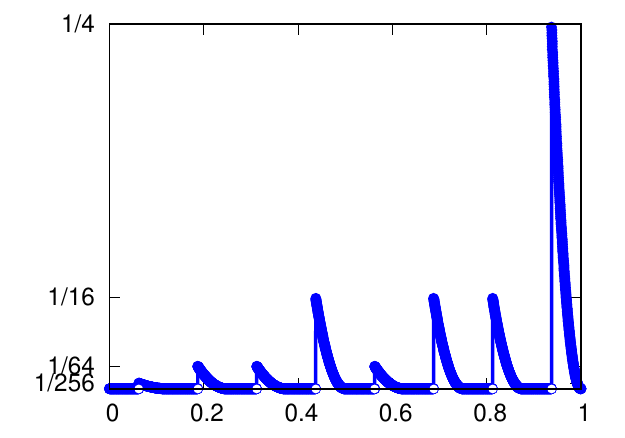}\\
(d) $f_4(x)$
\end{center}
\end{minipage}
\caption{$f_i(x)$ for $i=1, 2, 3$, and $4$}
\label{fig:Fix}
\end{figure}

Here, we describe the properties of the function $F$.
\begin{thm}
\label{thm:prop}
\begin{itemize}
\item[$(i)$] The function $F$ is Riemann integrable.
\item[$(ii)$] $\int_0^1 F(x) dx = 1/9$.
\end{itemize}
\end{thm}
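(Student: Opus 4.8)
I would prove $(i)$ with Lebesgue's criterion (a bounded function is Riemann integrable iff its set of discontinuities has measure zero) and $(ii)$ by integrating the step functions $F_k$ and passing to the limit, feeding in the closed form for $cum_a(2^k)$ from Proposition~\ref{prop:num10}.

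\textbf{Part $(i)$.} First I record that $F$ is bounded with $0\le F\le 1$: each $F_k(x)=num_a(t|_k)/num_a(2^k)$ is a ratio of a partial count to the maximal count $num_a(2^k)=4^k$, so $0\le F_k\le 1$, and $F=\lim_k F_k$. By Lebesgue's criterion it then suffices to show the discontinuity set has measure zero, and I claim in fact that every discontinuity of $F$ is a dyadic rational $m/2^n$; these form a countable, hence null, set. To prove the claim I would first reindex the defining series (equivalently, invoke the self-similarity behind Proposition~\ref{prop:num10}) to obtain the functional equation $F(x)=\tfrac14 F(2x)$ for $0\le x<1/2$ and $F(x)=(1-x)^2+F(2x-1)$ for $1/2\le x<1$. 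One checks these against small cases, e.g. $F(1/4)=1/16$, $F(1/2)=1/4$, $F(3/4)=5/16$.

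\textbf{Continuity at non-dyadic points.} Peeling the first $n$ binary digits with this relation gives $F(x)=A_n(x)+4^{-z_n(x)}F(\sigma^n x)$, where $\sigma$ is the doubling map, $z_n(x)$ is the number of $0$'s among the first $n$ fractional digits, and $A_n(x)=\sum_{m=1}^{n}4^{-z_{m-1}(x)}[x_m=1]\,(1-\sigma^{m-1}x)^2$. Fix a non-dyadic $x_*$ and take $y$ in the level-$n$ dyadic interval containing $x_*$, so $y$ and $x_*$ share their first $n$ digits and hence the same $z_n$ and the same branch pattern. Then \[ |F(y)-F(x_*)|\le |A_n(y)-A_n(x_*)| + 4^{-z_n(x_*)}. \] Since a non-dyadic point has infinitely many $0$'s, $z_n(x_*)\to\infty$, killing the last term; and each summand of $A_n(y)-A_n(x_*)$ is bounded by $4^{-z_{m-1}}\cdot 2\cdot 2^{-(n-m+1)}$, whose total tends to $0$ because $z_{m-1}\to\infty$. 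Hence $F$ is continuous at $x_*$, the discontinuities lie in the dyadic rationals, and $(i)$ follows.

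\textbf{Part $(ii)$.} Here $F_k$ is the step function equal to $num_a(t)/4^k$ on each $[t/2^k,(t+1)/2^k)$, so \[ \int_0^1 F_k(x)\,dx=\frac{1}{2^k}\sum_{t=0}^{2^k-1}\frac{num_a(t)}{4^k}=\frac{cum_a(2^k)-num_a(2^k)}{8^k}=\frac{cum_a(2^k)-4^k}{8^k}. \] Substituting the formula for $cum_a(2^k)$ from Proposition~\ref{prop:num10}, every term except $\tfrac{1}{18}\cdot 2\cdot 8^k$ is of lower exponential order than $8^k$, so $\int_0^1 F_k\to 2/18=1/9$. To conclude $\int_0^1 F=\lim_k\int_0^1 F_k$ I would invoke bounded convergence ($0\le F_k\le 1$, $F_k\to F$ pointwise, all integrands Riemann hence Lebesgue integrable with equal integrals); this is legitimate precisely because $(i)$ has already secured that $F$ is integrable. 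As an independent check one can integrate term by term: treating the digits as i.i.d.\ fair bits under Lebesgue measure gives $\int_0^1 f_i=(5/2)^{i-1}\cdot\tfrac12\cdot\tfrac{4^{-i}}{3}=\dfrac{5^{i-1}}{3\cdot 8^i}$ for $i\ge1$ and $\int_0^1 f_0=0$, whence, by monotone convergence ($f_i\ge0$), $\int_0^1 F=\sum_{i\ge1}\dfrac{5^{i-1}}{3\cdot 8^i}=\tfrac13\cdot\tfrac13=\tfrac19$.

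\textbf{Main obstacle.} The delicate step is the continuity at non-dyadic points in $(i)$. The series $\sum_i f_i$ does \emph{not} converge uniformly on $[0,1]$ (each $f_i$ has supremum near $1/4$), so continuity cannot be read off from uniform convergence of the partial sums; it is the self-similar functional equation together with the divergence $z_n(x)\to\infty$ at non-dyadic points that makes the tail controllable. Once that is in hand, both the integrability and the evaluation of the integral are routine.
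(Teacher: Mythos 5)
Your proposal is correct, and on both parts it takes a genuinely different route from the paper's. For part $(i)$ the paper uses the same Lebesgue-criterion skeleton (bounded plus countably many discontinuities), but it gets boundedness from a dedicated lemma (Lemma~\ref{lem:bnd}), which splits the binary expansion into clusters of consecutive $0$s and $1$s and shows $F(x)<4/9$ on $(0,1)$, while continuity at points with infinite expansion is merely asserted ``according to the constructing method of $f_i$.'' Your de Rham-style functional equation $F(x)=\tfrac14 F(2x)$ on $[0,1/2)$ and $F(x)=(1-x)^2+F(2x-1)$ on $[1/2,1)$ is correct (I checked it against the series and your sample values $F(1/4)=1/16$, $F(1/2)=1/4$, $F(3/4)=5/16$), and the digit-peeling estimate with $z_n(x)\to\infty$ supplies a quantitative continuity proof that the paper leaves implicit --- a genuine strengthening. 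For part $(ii)$ the paper integrates term by term: it invokes Arzela's theorem for $\sum_i f_i$, computes $\int_0^1 f_1 = 1/24$ from $f_1=(x-1)^2$ on $[1/2,1)$, and uses the self-affinity $\int_0^1 f_{i+1}(x)dx = \tfrac58 \int_0^1 f_i(x)dx$ to sum a geometric series. You instead integrate the step approximants $F_k$ directly, obtaining $\int_0^1 F_k(x)dx = (cum_a(2^k)-4^k)/8^k \to 1/9$ from the closed form of $cum_a(2^k)$ in Proposition~\ref{prop:num10} and concluding by bounded convergence (legitimately noting this needs $(i)$ first); this ties the integral transparently to the spatio-temporal count and is arguably cleaner. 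Your i.i.d.-fair-bits check, $\int_0^1 f_i = 5^{i-1}/(3\cdot 8^i)$, reproduces exactly the paper's term-wise values, so the two computations cross-validate.

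One soft spot: you justify $0\le F_k\le 1$ by calling $num_a(2^k)=4^k$ ``the maximal count'' over $t\le 2^k$. That maximality is true but not obvious --- $num_a$ is far from monotone (e.g.\ $num_a(3)=5>4=num_a(2)$), and the paper spends all of Lemma~\ref{lem:bnd} establishing a sharp bound. Fortunately nothing in your argument needs the sharp constant: since the pattern at time $t-1$ fits in a square of side $2t-1$, one has $num_a(t)\le (2t-1)^2 < 4\cdot 4^k$ for $t\le 2^k$, hence $F_k\le 4$ uniformly, and every place you use the bound (the $4^{-z_n(x_*)}$ tail term and the bounded-convergence step) tolerates any fixed constant. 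So replace ``$\le 1$'' by ``$\le 4$'' (or cite Lemma~\ref{lem:bnd} for the sharp bound) and your proof is complete.
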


Before we prove Theorem~\ref{thm:prop}, we show the following lemma.

\begin{lem}
\label{lem:bnd}
The function $F$ is on $[0, 1]$.
\end{lem}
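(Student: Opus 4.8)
The plan is to establish the two-sided estimate $0 \le F(x) \le 1$ on all of $[0,1]$, which is exactly what is needed to see that $F$ is bounded (and incidentally confirms the codomain claimed in Theorem~\ref{thm:fx}). Nonnegativity is immediate, since every summand $f_i(x) = x_i\,4^{\sum_{j=0}^{i-1}x_j}\bigl(\sum_{j=i}^{\infty}(1-x_j)/2^{j}\bigr)^2$ is a product of nonnegative factors, so each partial sum $F_k$ and hence $F$ is $\ge 0$. The real content is the upper bound, and here a direct term-by-term estimate is doomed: bounding the tail crudely by $\sum_{j\ge i}2^{-j}=2^{1-i}$ controls each summand only by a constant, while there are infinitely many of them. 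The mechanism that actually keeps $F$ finite is that the growth factor $4^{\sum_{j<i}x_j}$ and the squared tail cannot both be large simultaneously, and quantifying this trade-off is the crux of the lemma.

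To capture the trade-off I would use the self-similarity already recorded in Proposition~\ref{prop:num10}. With $F_k(x)=num_a(t|_k)/4^k$ for $x=t|_k/2^k$ and $num_a(2^k)=4^k$, the recurrence specializes to $num_a(2^k+r)=4\,num_a(r)+(2^k-r)^2$ for $0\le r<2^k$, which in the limit becomes the pair of functional equations $F(x)=\tfrac14 F(2x)$ on $[0,1/2)$ and $F(x)=(1-x)^2+F(2x-1)$ on $[1/2,1)$, together with $F(1)=1$. The decisive guess is the quadratic majorant $g(x):=\tfrac13\bigl(1-(1-x)^2\bigr)$: a one-line computation gives $\tfrac14 g(2x)=g(x)-\tfrac13 x\le g(x)$ on $[0,1/2)$ and, remarkably, $(1-x)^2+g(2x-1)=g(x)$ exactly on $[1/2,1)$, so $g$ is compatible with both branches and forces equality on the right one.

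I would then make this rigorous at the level of the integers by proving, by induction on $k$, that $num_a(t)\le\tfrac13\bigl(4^k-(2^k-t)^2\bigr)$ for $0\le t<2^k$, while recording separately that $num_a(2^k)=4^k$. The base case is trivial; in the step, the only recurrence call is to $num_a(r)$ with $r=t-2^k<2^k$, so the estimate is never applied at a power of two (where it would be false). The upper half $2^k\le t<2^{k+1}$ closes with equality, and the lower half $t<2^k$ follows from the elementary monotonicity $4^k-(2^k-t)^2\le 4^{k+1}-(2^{k+1}-t)^2$. Dividing by $4^k$ yields $F_k(x)\le g(x)\le\tfrac13$ for every dyadic $x<1$; since $t|_k/2^k\uparrow x$, letting $k\to\infty$ gives $F(x)\le\tfrac13$ on $[0,1)$, whereas $F(1)=1$. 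Hence $0\le F\le 1$ on $[0,1]$. Beyond discovering $g$, the main obstacle is exactly this boundary subtlety: the sharp bound $F\le 1/3$ holds only on $[0,1)$, so the jump up to $F(1)=1$ — the very discontinuity the paper is built around — must be kept out of the inductive majorant and handled by hand.
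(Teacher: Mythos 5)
Your proof is correct, and it takes a genuinely different route from the paper's. The paper works directly on the series representation $F(x)=\sum_i f_i(x)$ from Theorem~\ref{thm:fx}: it decomposes the binary expansion of $x$ into maximal clusters $(O_1 I_1 O_2 I_2 \cdots)$ of consecutive $0$s and $1$s, sums $f_i$ cluster by cluster, and splits into three cases (genuinely infinite expansions, finite expansions, expansions with trailing $1$s), arriving at $F(x)<4/9$ on $(0,1)$. So your remark that a term-by-term estimate is ``doomed'' is too strong: the trade-off between $4^{\sum_{j<i}x_j}$ and the squared tail that you identify is exactly what the paper's cluster grouping exploits, only with a cruder final constant. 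Your approach instead works at the level of the recurrence \eqref{eq:4tk}, reading off the functional equations $F(x)=\tfrac14 F(2x)$ on $[0,1/2)$ and $F(x)=(1-x)^2+F(2x-1)$ on $[1/2,1)$, and verifying the quadratic supersolution $g(x)=\tfrac13\bigl(1-(1-x)^2\bigr)$; your integer-level induction checks out (the monotonicity step holds since $4^{k+1}-(2^{k+1}-t)^2-4^k+(2^k-t)^2=2^{k+1}t\ge 0$, the upper half closes with equality because $2^{k+1}-(2^k+r)=2^k-r$, and the exclusion of $t=2^k$, where $num_a(2^k)=4^k>4^k/3$, is correctly kept out of the inductive statement). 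The passage to the limit is legitimate because the paper defines $F(x)=\lim_k F_k$ along truncations and $g$ is continuous. What each approach buys: yours is more self-contained (it needs only the recurrence, not the closed-form series), avoids the three-way case analysis over expansion types, and yields the optimal constant, $F\le 1/3$ on $[0,1)$, which is sharp along $x=1-2^{-m}$ where $F(1-2^{-m})=\tfrac13(1-4^{-m})$; the paper's cluster bookkeeping, while giving only $4/9$, makes explicit which digit patterns control the size of each $f_i$ and handles the two-expansion convention (trailing $1$s versus finite expansions) transparently, which matters again in the discontinuity analysis of Theorem~\ref{thm:prop}~$(i)$.
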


\begin{proof}
It can be easily proven that $F(0)=0$ and $F(1)=1$ by the definition of $F$.

Now, we show that $F(x) < 1$ for $x \in (0, 1)$.
For $x = \sum_{i=0}^{\infty} x_i/2^i \in (0, 1)$, the sequence $(x_0 x_1 x_2 x_3 \cdots)$ is represented by $(O_1 I_1 O_2 I_2 O_3 I_3 \cdots)$,
where $O_m$ and $I_m$ are the $m$-th clusters of continuous $0$s and $1$s, respectively, in a binary number of $x$ for $m \in {\mathbb Z}_{>0}$. 
We denote $O_{m, k} = i$, where $x_i$ is the $k$-th element of $O_m$ for $1 \leq k \leq |O_m|$. 
We also denote $I_{m, k} = i$, where $x_i$ is the $k$-th element of $I_m$ for $1 \leq k \leq |I_m|$.
For $(x_0 x_1 x_2 x_3 x_4) = (0 1 0 0 1)$, for example, 
we have $O_1 = x_0$, $I_1 = x_1$, $O_2 = (x_2 x_3)$, and $I_3 = x_4$. 
Thus, we have $O_{1,1} = 0$, $I_{1,1} = 1$, $O_{2,1} = 2$, $O_{2,2} = 3$, and $I_{3,1} = 4$.

We prove the following three cases.

\begin{itemize}
\item[$(a)$] First, we consider infinite binary decimals $x$, except $x = \sum_{i=0}^{m-1} x_i/2^i + \sum_{i=m}^{\infty} 1/2^i$ for $m \in {\mathbb Z}_{>0}$.
In this case, for any $x_i=0$, there exists $l \in {\mathbb Z}_{>0}$ such that $x_{i+l} = 0$.
Then, we have $(O_1 I_1 O_2 I_2 O_3 I_3 \cdots)$ for $0 < |O_n|, |I_n| < \infty$ ($n \in {\mathbb Z}_{>0}$). 
Based on Equation~\eqref{eq:fi} for $i=I_{n,k}$,
\begin{align}
f_{I_{n,k}}(x) &= 4^{\sum_{j=1}^{n-1} |I_j|+k-1} \left( \sum_{i=n+1}^{\infty} \sum_{j=0}^{|O_i|-1} \frac{1}{2^{O_{i,1}+j}} \right)^2.
\end{align}
Thus, we have
\begin{align}
F(x) &= \sum_{i=1}^{\infty} f_i (x)
= \sum_{n=1}^{\infty} \sum_{k=1}^{|I_n|} f_{I_{n, k}}(x) \\
&= \sum_{n=1}^{\infty} 4^{\sum_{j=1}^{n-1} |I_j|} \left( \sum_{i=n+1}^{\infty} \sum_{j=0}^{|O_i|-1} \frac{1}{2^{O_{i,1}+j}} \right)^2 \sum_{k=1}^{|I_n|} 4^{k-1} \\
&= \frac{4}{3} \sum_{n=1}^{\infty} \left( 4^{\sum_{j=1}^n |I_j|} - 4^{\sum_{j=1}^{n-1} |I_j|} \right) \left( \sum_{i=n+1}^{\infty} \frac{1}{2^{O_{i,1}}} \left( 1 - \frac{1}{2^{|O_i|}} \right) \right)^2 \\
&< \frac{4}{3} \sum_{n=1}^{\infty} \left( 4^{\sum_{j=1}^n |I_j|} - 4^{\sum_{j=1}^{n-1} |I_j|} \right) \left( \sum_{i=n+1}^{\infty} \frac{1}{2^{O_{i,1}}} \right)^2 \\
&\leq \frac{4}{3} \sum_{n=1}^{\infty} \left( 4^{\sum_{j=1}^n |I_j|} - 4^{\sum_{j=1}^{n-1} |I_j|} \right) \frac{4}{4^{O_{n+1,1}}} \\
&< \frac{4^2}{3} \sum_{n=1}^{\infty} \frac{1}{4^{\sum_{j=1}^n |O_j|+1}} \\
&< \frac{4}{3} \sum_{n=1}^{\infty} \frac{1}{4} = \frac{4}{3} \cdot \frac{1}{3} = \frac{4}{9}.
\end{align}
\end{itemize}
The other cases can be proved in a similar manner.
\begin{itemize}
\item[$(b)$] For a finite binary number $x = \sum_{i=0}^l x_i/2^i$, there exists a positive integer $m (\leq l)$ such that $(O_1 I_1 O_2 I_2 \cdots O_{m-1} I_{m-1} O_m)$ for $0 < |O_n|, |I_n| < \infty$ ($1 \leq n < m$) and $|O_m| = \infty$.
\begin{align}
F(x) &= \sum_{n=1}^{m-1} \sum_{k=1}^{|I_n|} f_{I_{n, k}}(x) \\
&= \frac{4}{3} \sum_{n=1}^{m-1} \left( 4^{\sum_{j=1}^n |I_j|} - 4^{\sum_{j=1}^{n-1} |I_j|} \right) \left( \sum_{i=n+1}^m \frac{1}{2^{O_{i,1}}} \left( 1 - \frac{1}{2^{|O_i|}} \right) \right)^2 \\
&< \frac{4}{3} \sum_{n=1}^{m-1} \frac{1}{4} = \frac{4}{9}.
\end{align}
\item[$(c)$] For a finite binary number $x = \sum_{i=0}^{l-1} x_i/2^i + \sum_{i=l}^{\infty} 1/2^i$, there exists a positive integer $m (\leq l)$ such that $(O_1 I_1 O_2 I_2 \cdots O_m I_m)$ for $0 < |O_n|, |I_n| < \infty$ ($1 \leq n < m$), $1 \leq |O_m| < m$ and $|I_m| = \infty$.
\begin{align}
F(x) &= \sum_{n=1}^{m-1} \sum_{k=1}^{|I_n|} f_{I_{n, k}}(x) 
+ \sum_{k=1}^{\infty} f_{I_{m, k}}(x) 
= \sum_{n=1}^{m-1} \sum_{k=1}^{|I_n|} f_{I_{n, k}}(x) 
< \frac{4}{9}.
\end{align}
\end{itemize}
Therefore, $F(x) \in [0,1]$ for $x \in [0,1]$.
\end{proof}

\begin{proof}[Proof of Theorem~\ref{thm:prop}~$(i)$]
By Lemma~\ref{lem:bnd}, $F$ is bounded.
Let $U$ denote the set of points of $(0, 1)$ that have an infinite binary expansion.
According to the constructing method of $f_i$, $F$ is continuous at $0$ and points in $U$
(if the binary expansion of $x$ is finite except $0$, $F$ is discontinuous).
The function $F$ is Riemann integrable because it has only countable discontinuous points.
\end{proof}

\begin{figure}[H]
\begin{center}
\includegraphics[width=.6\linewidth]{./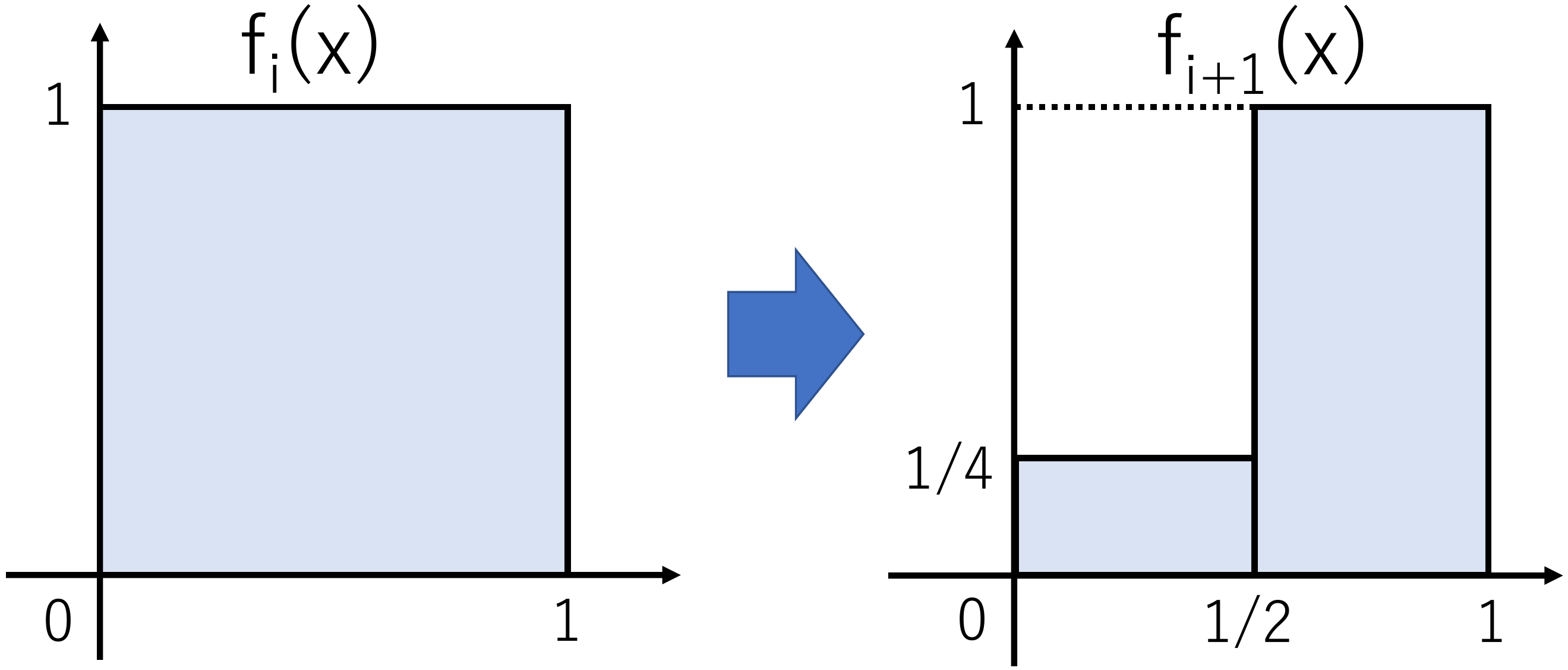}
\end{center}
\caption{Self-affine set $f_{i+1}(x)$, union of affine copies of $f_i(x)$}
\label{fig:fsim}
\end{figure}

\begin{proof}[Proof of Theorem~\ref{thm:prop}~$(ii)$]
By Theorem~\ref{thm:prop}~$(i)$, $F$ is Riemann integrable.
The sum of the sequence $\sum_{i=0}^n f_i(x)$ does not uniformly converge to $F$. However, it is uniformly finite and converges point-wise. 
Thus, $F$ is term-wise integrable by Arzela's theorem.

According to the definition of $f_i$, it can be easily demonstrated that $f_1(x)$ on $1/2 \leq x < 1$ is equivalent to a quadratic function $(x-1)^2$.
Figure~\ref{fig:fsim} shows the self-affine set $f_{i+1}(x)$ derived from $f_i(x)$ for any $i \in {\mathbb Z}_{> 0}$.
The right part of $f_{i+1}(x)$ on $1/2 \leq x \leq 1$ 
is generated by reducing the breadth of $f_i(x)$ by half. 
The left part of $f_{i+1}(x)$ on $0 \leq x \leq 1/2$ 
is generated by reducing the breadth and length of $f_i(x)$ by half and a quarter, respectively.
Owing to the self-affinity of $f_i(x)$, we have $\int_0^1 f_{i+1}(x) dx = (1/2 + 1/8) \int_0^1 f_i(x) dx$ for $i \in {\mathbb Z}_{>0}$. 
Therefore, 
\begin{align}
\int_0^1 F(x) dx &= \sum_{i=0}^{\infty} \int_0^1 f_i(x) dx 
= \int_0^1 f_1(x) dx \sum_{i=0}^{\infty} \left( \frac{5}{8} \right)^i
= \frac{1}{24} \cdot \frac{8}{3} = \frac{1}{9}.
\end{align}
\end{proof}


\subsection{Numbers of nonzero states for $2$dECA $T_b$ and given function}

In this section, we exmine the orbit of $2$dECA $T_b$.
Figure~\ref{fig:st696} shows the spatio-temporal pattern $T_b^t x_o$ for $0 \leq t \leq 15$.
The spatial and spatio-temporal patterns for $T_b$ hold self-similarity as effectively as the patterns of $T_a$ and the structure that consists of quadrangular pyramids is similar to that of $T_a$.
The spatial pattern $T_b^t x_o$ for each time step $t$ consists of four congruent patterns, which are congruent with the spatial pattern of time step $t-2^k$, and one square with the length of $2^{k+1}-t$ cells.
In the spatio-temporal pattern $\{ T_b^t x_o\}_{t=0}^{2^k}$, the squares consist of a quadrangular pyramid whose apex is located at the bottom.


\begin{figure}[H]
\begin{center}
\includegraphics[width=1.\linewidth]{./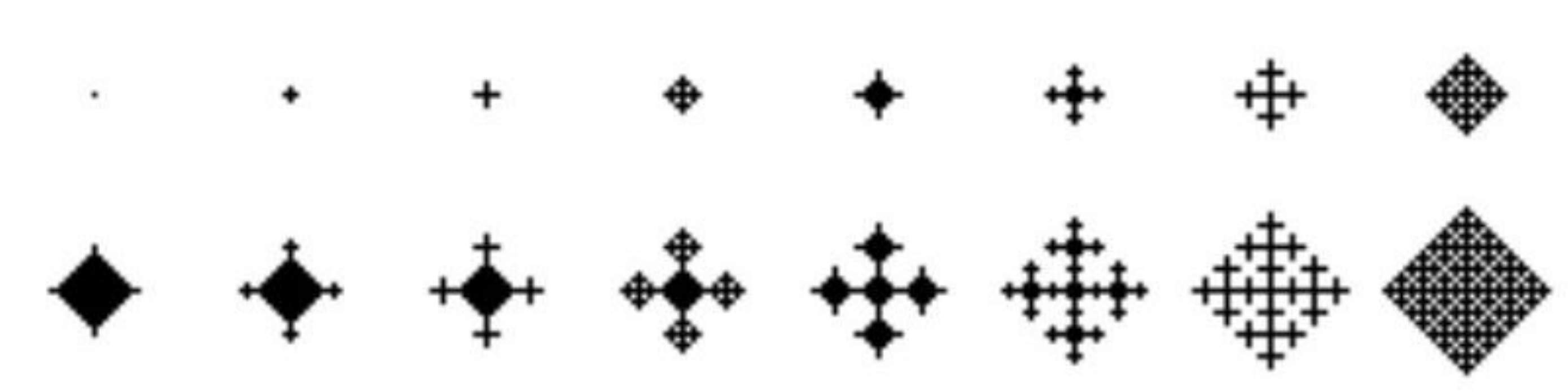}
\end{center}
\caption{Spatio-temporal pattern of $T_b^t x_o$ for $0 \leq t \leq 15$}
\label{fig:st696}
\end{figure}

We obtain the following results regarding the values of $num_b(t)$ and $cum_b(2^k)$.

\begin{prop}
\label{prop:num696}
For $t = \sum_{i=0}^k t_i 2^i \geq 0$ with $k \in {\mathbb Z}_{\geq 0}$ such that $t_k=1$ and $t_i=0$ for $i > k$, we have
\begin{align}
num_b(t) &= \sum_{i=0}^k t_{k-i} 4^{\sum_{j=0}^i t_{k-j+1}} A\left(\sum_{j=0}^{k-i} t_j 2^j \right),\\
cum_b(2^k) &= \frac{8 \cdot 5^k}{9} + \frac{2}{9} 8^k - \frac{1}{9} 2^k,
\end{align}
where $A(t) = 2 (2^{k+1}-t+1)(2^{k+1}-t) +1 - (2/3) (2^k+1)(2^k+2) \prod_{j=0}^{k-1} (1-t_j)$.
\end{prop}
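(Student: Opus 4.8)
The plan is to follow the same two-stage strategy used for $T_a$ in Proposition~\ref{prop:num10}, replacing the solid central square of $T_a$ by the denser central block of $T_b$. First I would read off from Figure~\ref{fig:st696} the self-similar decomposition of the spatial pattern $T_b^t x_o$ stated just above the proposition: four copies congruent to $T_b^{t-2^k}x_o$ together with one central square of side $2^{k+1}-t$. Counting nonzero states across this decomposition yields a recurrence of exactly the shape of Equation~\eqref{eq:4tk},
\begin{align}
num_b(t) = 4^{t_k}\, num_b(t - t_k 2^k) + t_k\, A(t),
\end{align}
with base values $num_b(0)=0$ and $num_b(1)=1$, where $A(t)$ denotes the number of nonzero states of the central square at the top scale $k$ (the factor $4^{t_k}$ encodes the four congruent copies when $t_k=1$ and is trivial when $t_k=0$).

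The key new ingredient is the evaluation of $A(t)$. Unlike the $T_a$ square, whose count is the plain area $(2^{k+1}-t)^2$, the $T_b$ block carries a crystal pattern, so I would count it directly from Figure~\ref{fig:st696} and expect it to organize as two nested squares, $A(t) = (2^{k+1}-t)^2 + (2^{k+1}-t+1)^2$ up to a boundary term, which is precisely the stated $2(2^{k+1}-t+1)(2^{k+1}-t)+1$. The correction $-(2/3)(2^k+1)(2^k+2)\prod_{j=0}^{k-1}(1-t_j)$ is supported only on $t=2^k$ (all lower bits vanishing) and accounts for the degenerate slice there; as a consistency check one verifies $A(2^k)=(4^{k+1}-1)/3$, so $num_b(2^k)=(4^{k+1}-1)/3$. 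With $A$ in hand, iterating the recurrence — peeling the highest set bit at each step, lowering the working scale, and telescoping exactly as in the passage following Equation~\eqref{eq:4tk} — produces the stated closed form, with $A$ evaluated at the truncated value $\sum_{j=0}^{k-i}t_j 2^j$ at level $i$.

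For $cum_b(2^k)$ I would decompose the space-time solid $\{T_b^t x_o\}_{t=0}^{2^k}$ into quadrangular pyramids as in Figure~\ref{fig:sph}, paralleling the $T_a$ case: five copies $P_k$ self-similar to the solid one scale down, plus one central pyramid $Q_k$ whose apex lies at the bottom. Writing $u_k = cum_b(2^k)$ and letting $v_k$ count the nonzero states of $Q_k$, the decomposition gives $u_0 = 1$ and $u_{k+1} = 5 u_k + v_{k+1}$. Summing the nonzero counts over the horizontal $T_b$-square slices of $Q_{k+1}$ yields $v_{k+1} = \tfrac{1}{3}(2\cdot 8^k + 2^k)$. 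Solving this linear recurrence — the homogeneous part contributing the $5^k$ term, and the particular solutions for the $8^k$ and $2^k$ forcing terms contributing $\tfrac{2}{9}8^k$ and $-\tfrac{1}{9}2^k$ — and fixing the remaining constant by $u_0=1$ gives $cum_b(2^k) = \tfrac{8}{9}5^k + \tfrac{2}{9}8^k - \tfrac{1}{9}2^k$.

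The main obstacle is the exact enumeration of the nonzero states inside the central block and the pyramid slices of $T_b$, together with the careful treatment of its boundary: getting the quadratic count $A$ right, and in particular isolating the single degenerate case $t=2^k$ that the product $\prod_{j=0}^{k-1}(1-t_j)$ encodes. Once these geometric counts are pinned down from Figure~\ref{fig:st696}, the remaining work — the telescoping of the bit-peeling recurrence and the solution of the linear recurrence for $u_k$ — is routine and runs in complete parallel to the $T_a$ computation.
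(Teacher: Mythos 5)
Your proposal is correct and takes essentially the same approach as the paper: the same self-similar decomposition yielding the bit-peeling recurrence $num_b(t)=4^{t_k}\,num_b(t-t_k2^k)+t_k A(t)$ with the same central-square count (your identity $s^2+(s+1)^2=2s(s+1)+1$ is in fact exact, no boundary term needed, and matches the paper's $A$ together with the $t=2^k$ correction), and the same pyramid decomposition $y_{k+1}=5y_k+z_{k+1}$ with $z_{k+1}=\tfrac13(2\cdot 8^k+2^k)$ solved for $cum_b(2^k)$. Your consistency check $A(2^k)=num_b(2^k)=(4^{k+1}-1)/3$ is a small addition the paper leaves implicit, but the argument is otherwise the paper's own.
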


\begin{proof}
First, we calculate the number of nonzero states in the central square of a spatial pattern $T_b^t x_o$, which is a slice of $Q_k$ (see Figure~\ref{fig:sph} (a)).
The length of the square for time step $t$ is $2^{k+1}-t$ and the number of nonzero states in the square is
$2 \left( \sum_{i=1}^{2^{k+1}-t-1} (2i-1) \right) + 2(2^{k+1}-t)-1 
= 2 (2^{k+1}-t)(2^{k+1}-t-1) +1$.
Considering a correction amount for $t=2^k$, we define
\begin{align}
A(t) 
&= 2 (2^{k+1}-t+1)(2^{k+1}-t) +1 - \frac{2}{3} (2^k+1)(2^k+2) \prod_{j=0}^{k-1} (1-t_j)
\end{align}
for $2^k \leq t < 2^{k+1}$.
According to the self-similarity of the spatio-temporal pattern, we have the following equation. 
\begin{align}
\label{eq:4tk2}
\begin{cases}
num_b(0) &= 0, \\
num_b(1) &= 1,\\
num_b(t) &= 4^{t_k} \, num_b(t-t_k 2^k) + t_k A(t).
\end{cases}
\end{align}
By Equation~\eqref{eq:4tk2}, we have
$num_b(t-t_k 2^k) = 4^{t_{k-1}} num_b(t- t_k 2^k - t_{k-1} 2^{k-1}) + t_{k-1} A(t-t_k 2^k)$. Thus,
\begin{align}
num_b(t) 
&= 4^{t_k} \left( 4^{t_{k-1}} num_b(t- t_k 2^k - t_{k-1} 2^{k-1}) + t_{k-1} A(t-t_k 2^k) \right) + t_k A(t)\\
&= 4^{t_k + t_{k-1}} \, num_b(t- t_k 2^k - t_{k-1} 2^{k-1})  
+ 4^{t_k} t_{k-1} A(t-t_k 2^k) + t_k A(t).
\end{align}
Thorough inductive calculation, we have 
\begin{align}
num_b(t) &= 4^{\sum_{i=0}^k t_{k-i}} \, num_b \left(t- \sum_{i=0}^k t_{k-i} 2^{k-i} \right) \nonumber \\
& \qquad + \sum_{i=0}^k 4^{\sum_{j=0}^i t_{k-j+1}} t_{k-i} A\left(t- \sum_{j=0}^i t_{k-j+1} 2^{k-j+1} \right).
\end{align}
Because $num_b \left(t- \sum_{i=0}^k t_{k-i} 2^{k-i} \right) = num_b(0) = 0$, 
\begin{align}
\label{eq:num01}
num_b(t) &= \sum_{i=0}^k t_{k-i} 4^{\sum_{j=0}^i t_{k-j+1}} A\left(t- \sum_{j=0}^i t_{k-j+1} 2^{k-j+1} \right)\\
&= \sum_{i=0}^k t_{k-i} 4^{\sum_{j=0}^i t_{k-j+1}} A\left(\sum_{j=0}^k t_j 2^j - \sum_{j=0}^i t_{k-j+1} 2^{k-j+1} \right)\\
&= \sum_{i=0}^k t_{k-i} 4^{\sum_{j=0}^i t_{k-j+1}} A\left(\sum_{j=0}^{k-i} t_j 2^j \right).
\end{align}

Next, we calculate $cum_b(2^k)$.
Let $y_k = cum_b(2^k)$ and let $z_k$ be the number of nonzero states in the quadrangular pyramid $Q_k$ until time step $2^k-1$.
Thus, we have
\begin{align}
\begin{cases}
y_0 = 1,\\
z_{k+1} = \sum_{i=1}^{2^k} (2i^2-2i+1)
= (2 (2^k)^3 + 2^k)/3 \\
y_{k+1} = 5 y_k + z_{k+1}.
\end{cases}
\end{align}
Consequently, 
\begin{align}
y_k &= \frac{8 \cdot 5^k}{9} + \frac{2}{9} 8^k - \frac{1}{9} 2^k = cum_b(2^k).
\end{align}
\end{proof}

\begin{figure}[htbp]
\begin{minipage}{0.5\hsize}
\begin{center}
\includegraphics[width=70mm]{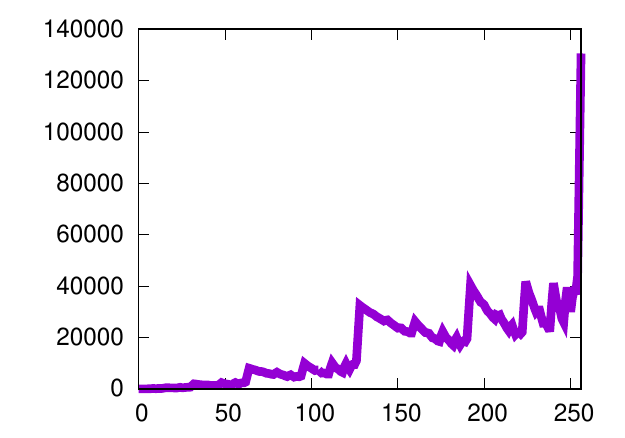}
\end{center}
\caption{$num_b(t)$ for $1 \leq t \leq 256$}
\label{fig:num696}
\end{minipage}
\begin{minipage}{0.5\hsize}
\begin{center}
\includegraphics[width=70mm]{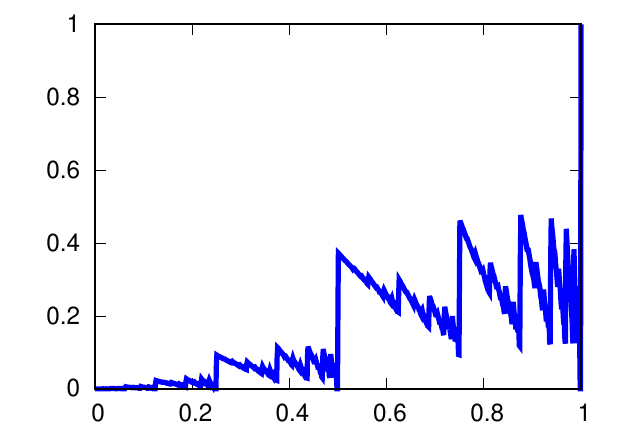}
\end{center}
\caption{$G(x)$}
\label{fig:Gx}
\end{minipage}
\end{figure}

\begin{dfn}
For $t|_k= \sum_{i=0}^k t_i 2^i (\leq 2^k)$, let $x = t|_k/2^k \in [0,1]$.
We define $G_k(x) := num_b(t|_k)/num_b(2^k)$ and $G(x) := \lim_{k \to \infty} G_k(x)$.
\end{dfn}

For the function $G$, we obtain the following result. 

\begin{thm}
\label{thm:gx}
The function $G: [0, 1] \to [0, 1]$ is given by
\begin{align}
G(x) = \sum_{i=0}^{\infty} x_i 4^{\sum_{j=0}^{i-1} x_j} \left( \frac{3}{2} \left( \sum_{j=i}^{\infty} \frac{1-x_j}{2^j} \right)^2 - \frac{1}{2 \cdot 4^i} \prod_{j=i+1}^{\infty} (1-x_j) \right),
\end{align}
where we adopt a finite binary decimal number if $x \in [0, 1]$ has two binary expansions.
\end{thm}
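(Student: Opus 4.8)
The plan is to mirror the proof of Theorem~\ref{thm:fx}: transport the closed form of Proposition~\ref{prop:num696} into the variable $x$ through the correspondence $x_i = t_{k-i}$, so that $x = t|_k/2^k = \sum_{i=0}^k x_i 2^{-i}$, and then let $k\to\infty$. The first step is to record the normalizing constant $num_b(2^k)$. Evaluating Proposition~\ref{prop:num696} at $t = 2^k$ only the $i=0$ term survives, and a short computation gives $A(2^k) = \tfrac13(4^{k+1}-1)$, hence $num_b(2^k) = \tfrac13(4^{k+1}-1)$. This plays the role of the clean factor $num_a(2^k) = 4^k$ in the $T_a$ case; I expect its asymptotics $num_b(2^k)\sim \tfrac43\,4^k$ to be exactly what converts the leading coefficients $2$ and $\tfrac23$ appearing in $A$ into the factors $\tfrac32$ and $\tfrac{1}{2\cdot 4^i}$ of the stated formula.

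Next I would substitute Proposition~\ref{prop:num696} into $G_k(x) := num_b(t|_k)/num_b(2^k)$ and rewrite the three ingredients of the summand in binary digits of $x$, just as for $F_k$. The exponent becomes $\sum_{j=0}^i t_{k-j+1} = \sum_{j=0}^{i-1} x_j$; the argument of $A$ equals $t|_k - \sum_{j=0}^i t_{k-j+1}2^{k-j+1}$, so the quantity $w := 2^{k-i+1} - \sum_{j=0}^{k-i} t_j 2^j$ (the same one squared in the $T_a$ computation) satisfies $w/2^k = \sum_{j=i}^{\infty} 2^{-j} - \sum_{j=i}^{k} x_j 2^{-j}$; and the correction product transforms as $\prod_{j=0}^{k-i-1}(1-t_j) = \prod_{m=i+1}^{k}(1-x_m)$. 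Expanding $A = 2w^2 + 2w + 1 - \tfrac23(2^{k-i}+1)(2^{k-i}+2)\prod_{j=0}^{k-i-1}(1-t_j)$ and dividing by $4^k$ then presents $G_k(x)$ as a finite sum of these rewritten pieces.

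Finally I would pass to the limit termwise. From $w/2^k \to \sum_{j=i}^{\infty}\tfrac{1-x_j}{2^j}=:W_i$ one gets $w^2/4^k\to W_i^2$, while $w/4^k\to 0$ and $1/4^k\to 0$; moreover $\tfrac23(2^{k-i}+1)(2^{k-i}+2)/4^k \to \tfrac{2}{3\cdot 4^i}$ and $\prod_{m=i+1}^{k}(1-x_m)\to\prod_{m=i+1}^{\infty}(1-x_m)$. Combining these with the prefactor $num_b(2^k)^{-1}\cdot 4^k \to \tfrac34$ turns the leading $2w^2$ into $\tfrac32 W_i^2$ and the correction into $\tfrac{1}{2\cdot 4^i}\prod_{j=i+1}^{\infty}(1-x_j)$, which is precisely the claimed summand. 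The ambiguity at dyadic rationals $x=m/2^i$ is resolved exactly as in Theorem~\ref{thm:fx}: the two binary expansions yield different values of the series, and we adopt the terminating one.

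The step requiring the most care is the bookkeeping of the ``local'' top index hidden in $A$ together with the justification for taking the limit inside the infinite sum. Although Proposition~\ref{prop:num696} writes $A$ with a fixed leading exponent, inside $num_b(t|_k)$ its argument $\sum_{j=0}^{k-i}t_j 2^j$ has leading bit $2^{k-i}$, so every $2^k$ in $A$ must be replaced by $2^{k-i}$ before dividing by $4^k$; keeping these powers of $4^{-i}$ straight is what produces the weight $4^{\sum_{j=0}^{i-1}x_j}$ sitting in front of the $\tfrac{1}{2\cdot 4^i}$ term. For the interchange of $\lim_k$ and $\sum_i$, the summands are nonnegative and admit a uniform geometric bound of the type already exploited in Lemma~\ref{lem:bnd}, so that $\lim_{k\to\infty}G_k(x)$ equals the stated series for every $x\in[0,1]$.
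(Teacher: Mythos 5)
Your proposal is correct and matches the paper's own proof essentially step for step: the paper likewise substitutes Proposition~\ref{prop:num696} into $G_k(x)=num_b(t|_k)/num_b(2^k)$ with $num_b(2^k)=A(2^k)=\tfrac{1}{3}(4^{k+1}-1)$, expands $A\bigl(\sum_{j=0}^{k-i}t_j2^j\bigr)$ with the local top index $k-i$ (your $2w^2+2w+1$ bookkeeping), rewrites everything in the digits $x_j=t_{k-j}$, and passes to the limit $k\to\infty$ so that $4^k/A(2^k)\to 3/4$ converts $2W_i^2$ and $\tfrac{2}{3\cdot 4^i}\prod_{j>i}(1-x_j)$ into the stated $\tfrac{3}{2}W_i^2$ and $\tfrac{1}{2\cdot 4^i}\prod_{j>i}(1-x_j)$, with the same convention of adopting the terminating expansion at dyadic rationals. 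Your added remarks on the termwise limit interchange go slightly beyond the paper, which performs this limit formally, but they do not change the argument.
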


\begin{proof}
For $t|_k= \sum_{i=0}^k t_i 2^i (\leq 2^k)$, we have
$t|_k = \sum_{i=0}^k t_i 2^i 
= 2^k \sum_{i=0}^k t_i 2^{i-k} 
= 2^k \sum_{i=0}^k t_{k-i} 2^{-i}
= 2^k \sum_{i=0}^k x_i 2^{-i}$.
For $x = t|_k/2^k =\sum_{i=0}^k x_i 2^{-i} \in [0,1]$, we have
\begin{align}
G_k(x) &= \frac{num(t|_k)}{num(2^k)}
= \sum_{i=0}^k t_{k-i} 4^{\sum_{j=0}^i t_{k-j+1}} \frac{A\left(\sum_{j=0}^{k-i} t_j 2^j \right)}{A(2^k)},
\end{align}
where
\begin{align}
A\left(\sum_{j=0}^{k-i} t_j 2^j \right)
&= 8 \cdot 4^{k-i} - 4 \left(2 \left(\sum_{j=0}^{k-i} t_j 2^j \right)-1 \right) 2^{k-i} + 2 \left(\sum_{j=0}^{k-i} t_j 2^j \right)^2 \nonumber \\
&\qquad -2 \left(\sum_{j=0}^{k-i} t_j 2^j \right) + 1 - \frac{2}{3} (2^{k-i}+1)(2^{k-i}+2) \prod_{j=0}^{k-i-1} (1-t_j)\\
&= 4^k \left( \frac{8}{4^i} - \frac{8}{2^i} \sum_{j=0}^{k-i} t_j \frac{2^j}{2^k} + \frac{4}{2^{k+i}} + 2 \left(\sum_{j=0}^{k-i} t_j \frac{2^j}{2^k} \right)^2 - \frac{2}{2^k} \sum_{j=0}^{k-i} t_j \frac{2^j}{2^k} \right . \nonumber \\
&\qquad \qquad \left . + \frac{1}{4^k} - \frac{2}{3} \left(\frac{1}{2^i}+\frac{1}{2^k} \right) \left(\frac{1}{2^i}+\frac{2}{2^k} \right) \prod_{j=0}^{k-i-1} (1-t_j) \right).
\end{align}
Because $\sum_{j=0}^{k-i} t_j 2^j/2^k 
= \sum_{j=0}^{k-i} x_{k-j} 2^{-(k-j)}
= \sum_{j=i}^k x_j 2^{-j}
= \sum_{j=0}^k x_j 2^{-j} - \sum_{j=0}^{i-1} x_j 2^{-j}$,
\begin{align}
\frac{A\left(\sum_{j=0}^{k-i} t_j 2^j \right)}{4^k}
&= \frac{8}{4^i} - \frac{8}{2^i} \left(\sum_{j=0}^k x_j 2^{-j} - \sum_{j=0}^{i-1} x_j 2^{-j} \right) + \frac{4}{2^{k+i}} \nonumber\\
& \qquad + 2 \left(\sum_{j=0}^k x_j 2^{-j} - \sum_{j=0}^{i-1} x_j 2^{-j} \right)^2 - \frac{2}{2^k} \left(\sum_{j=0}^k x_j 2^{-j} - \sum_{j=0}^{i-1} x_j 2^{-j} \right) \nonumber\\
& \qquad + \frac{1}{4^k} - \frac{2}{3} \left(\frac{1}{2^i}+\frac{1}{2^k} \right) \left(\frac{1}{2^i}+\frac{2}{2^k} \right) \prod_{j=i+1}^k (1-x_j).
\end{align}
Hence, 
\begin{align}
G(x) &= \lim_{k \to \infty} G_k (x)\\
&= \lim_{k \to \infty} \sum_{i=0}^k t_{k-i} 4^{\sum_{j=0}^i t_{k-j+1}} \frac{A\left(\sum_{j=0}^{k-i} t_j 2^j \right)}{A(2^k)}\\
&= \sum_{i=0}^{\infty} x_i 4^{\sum_{j=0}^{i-1} x_j} \left( \frac{3}{2} \left( \sum_{j=i}^{\infty} \frac{1-x_j}{2^j} \right)^2 - \frac{1}{2 \cdot 4^i} \prod_{j=i+1}^{\infty} (1-x_j) \right).
\end{align}
\end{proof}

Next, we describe the properties of $G$.

\begin{thm}
\label{thm:prop2}
\begin{itemize}
\item[$(i)$] The function $G$ on $[0, 1]$ is Riemann integrable.
\item[$(ii)$] $\int_0^1 G(x) dx = 1/6$.
\end{itemize}
\end{thm}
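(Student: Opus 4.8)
The plan is to reduce everything to the already-solved case of $F$ (Theorem~\ref{thm:prop}). Comparing the summand of Theorem~\ref{thm:gx} with that of Theorem~\ref{thm:fx}, I would distribute the bracket and write, pointwise on $[0,1]$ under the same finite-binary-expansion convention,
\begin{align}
G(x) = \frac{3}{2} F(x) - \frac{1}{2} H(x), \qquad H(x) := \sum_{i=0}^{\infty} x_i \, 4^{\sum_{j=0}^{i-1} x_j} \frac{1}{4^i} \prod_{j=i+1}^{\infty} (1-x_j).
\end{align}
Everything then follows once $H$ is understood. The decisive observation is that the $i$-th term of $H$ is nonzero only when $x_i = 1$ and $x_j = 0$ for all $j > i$; that is, $x$ is a nonzero dyadic rational whose last binary digit equal to $1$ sits exactly at position $i$. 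Hence for each $x$ at most one term survives, $0 \leq H \leq 1$, and $H$ vanishes off the countable set $D$ of nonzero dyadic rationals.

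For part $(i)$ I would show that $H$ is continuous at every point of $[0,1] \setminus D$, so that its discontinuity set lies in the countable (hence null) set $D$. Boundedness is already in hand, so this yields Riemann integrability of $H$ by Lebesgue's criterion; since $F$ is Riemann integrable by Theorem~\ref{thm:prop}~$(i)$, the combination $G = \frac{3}{2} F - \frac{1}{2} H$ is Riemann integrable as well, its discontinuity set being contained in $D$ together with that of $F$, again countable.

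For part $(ii)$ I would invoke linearity of the Riemann integral together with Theorem~\ref{thm:prop}~$(ii)$: since $H \geq 0$ is bounded and vanishes outside the countable set $D$, its Riemann integral is $0$, whence
\begin{align}
\int_0^1 G(x)\, dx = \frac{3}{2} \int_0^1 F(x)\, dx - \frac{1}{2} \int_0^1 H(x)\, dx = \frac{3}{2} \cdot \frac{1}{9} - 0 = \frac{1}{6}.
\end{align}
Equivalently, mirroring the proof of Theorem~\ref{thm:prop}~$(ii)$, one sets $g_i := \frac{3}{2} f_i - \frac{1}{2} h_i$ with $h_i$ the $i$-th summand of $H$, justifies term-wise integration of $\sum_i g_i$ by Arzela's bounded convergence theorem exactly as for $F$, observes that each $h_i$ is supported on finitely many points so that $\int_0^1 h_i = 0$ and $\int_0^1 g_i = \frac{3}{2} \int_0^1 f_i$, and sums to recover $\int_0^1 G = \frac{3}{2} \int_0^1 F = 1/6$.

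The step I expect to be the main obstacle is proving continuity of $H$ (equivalently of $G$) at the non-dyadic points, since $H$ does take values near $1$ at certain dyadics and those dyadics are dense. The resolution is that $H(y) \geq \eps$ forces the dyadic $y$ to have at most $\log_4(1/\eps)$ binary zeros before its terminal $1$; such $y$ cannot accumulate at a point whose binary expansion contains infinitely many zeros, i.e. at any non-dyadic $x$, because convergence $y_n \to x$ would force $x$ to inherit boundedly many zeros. This sparseness estimate is the only genuinely new ingredient beyond Theorem~\ref{thm:prop}; once it is in place, both the integrability and the value $1/6$ drop out of the decomposition above.
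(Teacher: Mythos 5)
Your proposal is correct, and its backbone is the same as the paper's: the paper proves this theorem via Lemma~\ref{lem:GFH}, which performs exactly your distribution of the bracket to get $G = \tfrac{3}{2}F - \tfrac{1}{2}(\text{correction})$, and then deduces $(i)$ from integrability of $F$ and $(ii)$ from linearity with $\tfrac{3}{2}\cdot\tfrac{1}{9} - \tfrac{1}{2}\cdot 0 = \tfrac{1}{6}$. The difference lies in how the correction term is handled, and here your treatment is actually tighter than the paper's. The paper identifies the correction at a dyadic $x$ (with terminal $1$ at position $k_x$) as $H_{k_x}(x + 2^{-k_x})$, where $H_k$ is the normalized $num_{S0}$ function of Remark~\ref{rmk:hx}; this buys the paper its narrative link between $G$ and the third automaton $T_{S0}$, but the integrability argument ``$F$ and $H_{k_x}$ are Riemann integrable, hence $G$ is'' quietly conflates the $x$-dependent family $\{H_{k_x}\}$ with a single function --- each $H_k$ has finitely many discontinuities, but the composite $x \mapsto H_{k_x}(x+2^{-k_x})$ is none of the $H_k$, so Remark~\ref{rmk:hx} does not directly apply; likewise the ``$\tfrac{1}{2}\cdot 0$'' in the paper's proof of $(ii)$ is asserted, not proved. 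You instead treat the correction as the standalone function $H(x) = 4^{-z(x)}$ on nonzero dyadics (with $z(x)$ the number of zeros before the terminal $1$) and zero elsewhere, and your sparseness estimate is exactly what is needed: since the first $m$ binary digits are locally constant near any non-dyadic point, and a non-dyadic $x$ has infinitely many zeros, any $y$ close enough to $x$ either has more than $\log_4(1/\eps)$ zeros before its terminal $1$ (so $H(y) < \eps$) or lies in a finite set; hence $H$ is continuous off the countable set of dyadics, Lebesgue's criterion gives integrability, and vanishing of the lower Darboux sums gives $\int_0^1 H = 0$. So your argument fills a genuine (if small) gap in the paper's proof of both parts; what it gives up is only the structural identification of the correction term with the $T_{S0}$ dynamics, which the paper uses for interpretation rather than for the computation itself.
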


Next, we prepare the following lemma.

\begin{lem}
\label{lem:GFH}
We show the relationship between $F$ and $G$ using the function $H_k$ in Remark~\ref{rmk:hx}.
\begin{align}
G(x) &= \sum_{i=0}^{\infty} x_i 4^{\sum_{j=0}^{i-1} x_j -1} \left( \frac{3}{2} \left( \sum_{j=i}^{\infty} \frac{1-x_j}{2^{j-i}} \right)^2 - \frac{1}{2} \prod_{j=i+1}^{\infty} (1-x_j) \right)\\
&= \frac{3}{2} \sum_{i=0}^{\infty} x_i 4^{\sum_{j=0}^{i-1} x_j} \left( \sum_{j=i}^{\infty} \frac{1-x_j}{2^j} \right)^2 - \frac{1}{2} \sum_{i=0}^{\infty} x_i 4^{\sum_{j=0}^{i-1} x_j -1} \prod_{j=i+1}^{\infty} (1-x_j)\\
&= \frac{3}{2} F(x) - \frac{1}{2} \sum_{i=0}^{\infty} x_i 4^{\sum_{j=0}^{i-1} x_j -1} \prod_{j=i+1}^{\infty} (1-x_j)\\
&= \frac{3}{2} F(x) - \frac{1}{2} H_{k_x} \left(x + \frac{1}{2^{k_x}} \right),
\end{align}
where $k_x \in {\mathbb Z}_{\geq 0}$ satisfies $x_{k_x}=1$ and $x_i = 0$ for all $i > k_x$.
\end{lem}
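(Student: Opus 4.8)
The plan is to start from the closed form of $G$ furnished by Theorem~\ref{thm:gx} and to separate its defining series, termwise, into the part that reproduces $F$ and a single correction term that will be matched against $H_{k_x}$. Distributing the bracket in Theorem~\ref{thm:gx} gives
\begin{align}
G(x)=\frac{3}{2}\sum_{i=0}^{\infty} x_i 4^{\sum_{j=0}^{i-1}x_j}\left(\sum_{j=i}^{\infty}\frac{1-x_j}{2^j}\right)^2-\frac{1}{2}\sum_{i=0}^{\infty} x_i 4^{\sum_{j=0}^{i-1}x_j}\,\frac{1}{4^i}\prod_{j=i+1}^{\infty}(1-x_j),
\end{align}
and by Theorem~\ref{thm:fx} the first sum is precisely $\tfrac{3}{2}F(x)$. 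Hence the entire task collapses to evaluating the second (correction) sum in closed form and recognizing it as an $H$-value.

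The crucial observation is that the factor $x_i\prod_{j=i+1}^{\infty}(1-x_j)$ is nonzero exactly when $x_i=1$ and $x_j=0$ for every $j>i$. I would therefore split into two cases. For $x$ with a non-terminating binary expansion there are infinitely many $1$'s, so this factor vanishes for every $i$; then $G(x)=\tfrac{3}{2}F(x)$ and the claimed $H_{k_x}$-term must be read as $0$, which is consistent because $H_k$ vanishes on more-than-$k$-digit decimals by Remark~\ref{rmk:hx} (and the index $k_x$ does not exist). For a finite binary decimal the condition holds for the single index $i=k_x$, the position of the last nonzero digit, so the correction sum reduces to the one term
\begin{align}
-\frac{1}{2}\cdot\frac{4^{\sum_{j=0}^{k_x-1}x_j}}{4^{k_x}}.
\end{align}

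It then remains to identify this single quantity with a value of $H_{k_x}$. Here I would invoke the representation of $H_k$ from Remark~\ref{rmk:hx}: writing $x=\sum_{i=1}^{k_x}x_i/2^i$ with $x_{k_x}=1$, the shifted argument $x+1/2^{k_x}$ lies in $[1/2^{k_x},1]$ and is exactly of the form $1/2^{k_x}+\sum_{i=1}^{k_x}\hat z_i/2^i$ with $\hat z_i=x_i$, whence $H_{k_x}\!\left(x+\tfrac{1}{2^{k_x}}\right)=4^{\sum_{i=1}^{k_x}x_i}/4^{k_x}$. Matching the exponent $\sum_{i=1}^{k_x}x_i$ against the $\sum_{j=0}^{k_x-1}x_j$ in the collapsed term (using $x_0=0$ and $x_{k_x}=1$) is the step that fixes the multiplicative constant standing in front of $H_{k_x}$.

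I expect this constant-tracking to be the main obstacle. One must line up the normalization $1/4^{k_x}$ and the digit sums produced by $G$ with the normalization built into $H_k$, namely $num_{S0}(t+1)/num_{S0}(2^{k})$ with $num_{S0}(2^{k})=4^{k}$, and check that the residual power of $4$ is absorbed correctly into the coefficient $-\tfrac12$; this is where an off-by-a-power error is easiest to make, so I would carry the bookkeeping of the exponent $\sum_{j=0}^{i-1}x_j-i$ explicitly rather than schematically. Finally I would verify that the finite-expansion convention already adopted in Theorems~\ref{thm:fx} and~\ref{thm:gx} is used consistently when forming $x+1/2^{k_x}$, and confirm the boundary values $x=0$ (both sides $0$) and $x=1$ (the single-digit $f_0$ term) directly, so that the identity $G(x)=\tfrac{3}{2}F(x)-\tfrac{1}{2}H_{k_x}\!\left(x+\tfrac{1}{2^{k_x}}\right)$ is established on all of $[0,1]$.
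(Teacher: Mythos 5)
Your route is exactly the paper's (the paper gives no separate proof environment for this lemma; the displayed chain of equalities \emph{is} the argument): distribute the bracket in Theorem~\ref{thm:gx}, recognize the first sum as $\tfrac{3}{2}F(x)$ via Theorem~\ref{thm:fx}, observe that the factor $x_i\prod_{j=i+1}^{\infty}(1-x_j)$ kills every term except $i=k_x$ (and all terms, when the expansion is non-terminating), and match the surviving term against a value of $H_{k_x}$. Everything up to and including the collapse of the correction sum to $4^{\sum_{j=0}^{k_x-1}x_j}/4^{k_x}$ is correct, as is your reading of the $H$-term as $0$ for infinite expansions.

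But the one step you defer --- the ``constant-tracking'' you flag as the main obstacle --- is precisely where the proposal is incomplete, and if you carry it out you find the printed identity cannot be matched. You yourself compute, correctly, $H_{k_x}\!\left(x+2^{-k_x}\right)=4^{\sum_{i=1}^{k_x}x_i}/4^{k_x}$, while the collapsed term is $4^{\sum_{j=0}^{k_x-1}x_j}/4^{k_x}$; since $x_0=0$ and $x_{k_x}=1$ the two exponents differ by exactly $1$, so the collapsed term equals $\tfrac{1}{4}H_{k_x}\!\left(x+2^{-k_x}\right)$ and the chain closes as $G(x)=\tfrac{3}{2}F(x)-\tfrac{1}{8}H_{k_x}\!\left(x+2^{-k_x}\right)$, equivalently $G(x)=\tfrac{3}{2}F(x)-\tfrac{1}{2}H_{k_x}(x)$ (writing $x=2^{-k_x}+\sum_{i=1}^{k_x}\hat{x}_i/2^i$ deletes the digit $x_{k_x}=1$, so $H_{k_x}(x)=4^{\sum_{j=1}^{k_x-1}x_j}/4^{k_x}$), \emph{not} the printed $-\tfrac{1}{2}H_{k_x}\!\left(x+2^{-k_x}\right)$. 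A one-line check at $x=1/2$: $G(1/2)=F(1/2)=1/4$ and $H_1(1)=1$, so $\tfrac{3}{2}\cdot\tfrac{1}{4}-\tfrac{1}{2}\cdot 1=-\tfrac{1}{8}\neq\tfrac{1}{4}$, whereas $\tfrac{3}{2}\cdot\tfrac{1}{4}-\tfrac{1}{2}H_1(1/2)=\tfrac{3}{8}-\tfrac{1}{8}=\tfrac{1}{4}$. (Note also that at $x=1$ the printed argument $H_0(2)$ leaves the domain $[0,1]$, while $H_0(1)=1$ matches the paper's own later use ``$G(1)=3/2\cdot 1-1/2\cdot 1$''.) So the factor-of-$4$ discrepancy you suspected is real: the last line of the lemma, like the exponent ``$\sum_{j=0}^{i-1}x_j-1$'' in the intermediate lines (which should be $-i$), is a typo, and a complete proof must correct the constant or the argument as above. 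None of the paper's subsequent uses is affected, since the $H$-term is supported on the dyadic rationals and contributes $0$ to $\int_0^1 G(x)\,dx$ regardless of the constant.
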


The lemma revealed the similarities and differences between $F$ and $G$.

\begin{proof}[Proof of Theorem~\ref{thm:prop2}~$(i)$]
As $H_{k_x}(0)= 0$ and $H_{k_x}(1)= 1$, $G(0)=0$ and $G(1) = 3/2 \cdot 1 - 1/2 \cdot 1 = 1$.
By lemmas~\ref{lem:bnd} and \ref{lem:GFH}, for $x \in (0,1)$, $G(x) < 3/2 \cdot 4/9 = 2/3$. 
Hence, $G(x) \in [0,1]$.

The functions $F$ and $H_{k_x}$ are Riemann integrable by Theorem~\ref{thm:prop}~$(i)$ and Remark~\ref{rmk:hx}, respectively.
Hence, $G$ is Riemann integrable.
\end{proof}

\begin{proof}[Proof of Theorem~\ref{thm:prop2}~$(ii)$]
By Theorem~\ref{thm:prop}~$(ii)$ and Lemma~\ref{lem:GFH}, 
\begin{align}
\int_0^1 G(x) dx =& \frac{3}{2} \cdot \frac{1}{9} + \frac{1}{2} \cdot 0 = \frac{1}{6}.  
\end{align}
\end{proof}

\section{Conclusions and future works}
\label{sec:cr}

We wrote down the number of nonzero states of the spatial and spatio-temporal patterns of two two-dimensional elementary cellular automata.
We presented two discontinuous Riemann integrable functions, $F$ and $G$, on the unit interval based on the dynamics of the automata.
They have similar properties because the structures of the limit sets are similar and hold self-similarities.
We also showed that $G$ is represented by $F$ and $H_k$, which is given by the orbit of another two-dimensional elementary cellular automaton $T_{S0}$, and $cum_{S0}(t)$ is represented by Salem's singular function $L_{1/5}$.

In future work, we plan to determine the other pathological functions based on the dynamics of elementary cellular automata.
In this study and previous works, we demonstrated the existence of singular functions and discontinuous Riemann integrable functions.
We will study the orbits of cellular automata that hold self-similarity and provide new pathological functions.

\section*{Acknowledgment}
This work is partly supported by a Grant-in-Aid for Scientific Research (18K13457) funded by the Japan Society for the Promotion of Science.


\begin{thebibliography}{10}

\bibitem{salem1943}
R. Salem.
\newblock On some singular monotonic functions which are strictly increasing.
\newblock {\em Transactions of the American Mathematical Society}, 53:427--439,
  1943.

\bibitem{derham1957}
G. de~Rham.
\newblock Sur quelques courbes definies par des equations fonctionnelles.
\newblock {\em Rendiconti del Seminario Matematico Universit{\`a} e Politecnico
  di Torino}, 16:101--113, 1957.

\bibitem{ulam1934}
Z.~A. Lomnicki and S. Ulam.
\newblock Sur la th{\'e}orie de la mesure dans les espaces combinatoires et son
  application au calcul des probabilit{\'e}s i. variables ind{\'e}pendantes.
\newblock {\em Fundamenta Mathematicae}, 23:237--278, 1934.

\bibitem{yhk1997}
M. Yamaguti, M. Hata, and J. Kigami.
\newblock {\em Mathematics of fractals, Translations of Mathematical
  Monographs}.
\newblock American Mathematical Society, 1997.
\newblock (translated by K. Hudson).

\bibitem{kawanami2014}
A. Kawaharada and T. Namiki.
\newblock Cumulative distribution of rule 90 and Lebesgue's singular function.
\newblock {\em Proceedings of Automata 2014}, 165--169, 2014.

\bibitem{kawa2014a}
A. Kawaharada.
\newblock Fractal patterns created by {U}lam's cellular automaton.
\newblock {\em Proceedings of International Workshop on Applications and
  Fundamentals of Cellular Automata 2014}, 484--486, 2014.

\bibitem{kawanami2017}
A. Kawaharada and T. Namiki.
\newblock Fractal structure of a class of two-dimensional two-state cellular
  automata.
\newblock {\em Proceedings of International Workshop on Applications and
  Fundamentals of Cellular Automata 2017}, 205--208, 2017.

\bibitem{kawanami2019}
A. Kawaharada and T. Namiki.
\newblock Relation between spatio-temporal patterns generated by
  two-dimensional cellular automata and a singular function.
\newblock {\em International Journal of Networking and Computing},
  9(2):354--369, 2019.

\bibitem{kawanami2020}
A. Kawaharada and T. Namiki.
\newblock Number of nonzero states in prefractal sets generated by cellular
  automata.
\newblock {\em Journal of Mathematical Physics}, 61(092702):1--17, 2020. 

\bibitem{kawa2021}
A. Kawaharada.
\newblock Singular function emerging from one-dimensional elementary cellular
  automaton rule 150.
\newblock arXiv:2008.13217.

\bibitem{riemann1867}
B. Riemann.
\newblock {\em Ueber die Darstellbarkeit einer Function durch eine
  trigonometrische Reihe}.
\newblock Dieterich, 1867.

\bibitem{thomae1875}
J. Thomae.
\newblock {\em Einleitung in die Theorie der bestimmten Integrale}.
\newblock Nebert, 1875.

\end{thebibliography}

\end{document}